\newcommand\grad{\operatorname{grad}}
\renewcommand\div{\operatorname{div}}
\newcommand\curl{\operatorname{curl}}
\renewcommand\sp{\operatorname{span}}
\newcommand\R{\mathbb{R}}
\newcommand\eps{\operatorname\epsilon}
\newcommand\x{\times}
\newcommand\A{{\mathcal A}}
\newcommand\B{{\mathcal B}}
\renewcommand\P{{\mathcal P}}
\newcommand\LL{{\mathcal L}}
\newcommand\T{{\mathcal T}}
\newcommand\supp{\operatorname{supp}}
\renewcommand{\Pi}{\varPi}
\newcommand{\<}{\langle}
\renewcommand{\>}{\rangle}
\long\def\COMMENT#1{\par\vbox{\hrule\vskip1pt \hrule width.125in height0ex #1\vskip1pt\hrule}}
\numberwithin{equation}{section}
\newtheorem{thm}{Theorem}[section]
\newtheorem{lem}[thm]{Lemma}
\begin{document}

\long\def\COMMENT#1{\par\vbox{\hrule\vskip1pt#1\vskip1pt\hrule}}

\title[A uniform inf--sup condition]{A uniform inf--sup condition with 
applications to preconditioning}
\author{Kent--Andre Mardal}
\address{Centre for Biomedical Computing at Simula Research Laboratory, Norway and
Department of Informatics, University of Oslo, Norway}
\email{kent-and@simula.no}
\urladdr{http://simula.no/people/kent-and}
\thanks{The first author was supported by the Research Council of Norway
through Grant 209951 and a Centre of Excellence grant
to the Centre for Biomedical Computing at Simula Research Laboratory.}
\author{Joachim Sch\"{o}berl}
\address{Institute for Analysis and Scientific Computing, 
Wiedner Hauptstrasse 8-10, 
1040 Wien, Austria}
\email{joachim.schoeberl@tuwien.ac.at}
\urladdr{http://www.asc.tuwien.ac.at/~schoeberl}
\thanks{}
\author{Ragnar Winther}
\address{Centre of Mathematics for Applications
and Department of Informatics,
University of Oslo, 0316 Oslo, Norway}
\email{ragnar.winther@cma.uio.no}
\urladdr{http://folk.uio.no/rwinther}
\thanks{The third author was supported by the Research Council of Norway
through a Centre of Excellence grant
to the Centre of Mathematics for Applications}
\subjclass[2010]{65N22, 65N30}
\keywords{parameter dependent Stokes problem, uniform preconditioners}
\date{October 31, 2011}

\begin{abstract}
A uniform inf--sup condition related to a parameter dependent Stokes problem is established.
Such conditions are intimately connected to the construction of
uniform 
preconditioners for the problem, i.e., 
preconditioners which behave uniformly well with respect to variations 
in the model parameter as well as the 
discretization parameter. For the present model, similar results 
have been derived before, but only by
utilizing extra regularity ensured by convexity of the domain. The
purpose of this paper is to remove 
this artificial assumption. As a byproduct of our analysis, 
in the two dimensional case we also construct a new projection operator for
the Taylor--Hood element  
which is uniformly bounded in $L^2$ and commutes with the divergence
operator. 
This construction is based on a tight connection between a subspace of
the Taylor--Hood velocity space and the lowest order Nedelec edge element.
\end{abstract}

\maketitle

\section{Introduction}\label{sec:intro}
The purpose of this paper is to discuss preconditioners for finite
element discretizations of a singular 
perturbation problem related to the linear Stokes problem.
More precisely, 
let $\Omega \subset \R^n$ be a bounded Lipschitz domain and $\eps \in
(0,1]$ a real parameter. We will consider singular
perturbation
problems of the form
\begin{equation}\label{stokes1}
\begin{array}{rl}
(I - \eps^2 \Delta) u - \grad p &= f \quad \text{in } \Omega,\\
\div u &= g \quad \text{in } \Omega,\\
u &= 0 \quad \text{on } \partial \Omega,
\end{array}
\end{equation}
where the unknowns $u$ and $p$ are a vector field and a scalar field,
respectively. For each fixed positive  value of the perturbation parameter
$\eps$ the problem behaves like the Stokes system,
but formally the system approaches a so--called mixed formulation of 
a scalar Laplace equation as this parameter tends to zero.
In physical terms this means that we are studying 
fluid flow in regimes ranging from linear Stokes flow to 
porous medium flow. Another motivation for studying preconditioners 
of these systems 
is that they frequently arises as subsystems in 
time stepping schemes for time dependent Stokes and Navier--Stokes
systems, cf. for example \cite{B-P-97, C-C-88,
  M-W-04,StokesInterface3, turek}.

The phrase {\it uniform preconditioners} for parameter
dependent problems like the ones we discuss here, 
refers to the ambition to construct preconditioners such that the 
preconditioned systems have 
condition numbers which are bounded
uniformly with respect to the perturbation parameter $\eps$ and 
the discretization. Such results have been obtained for the
system\eqref{stokes1}
in several of the
studies mentioned above, but a necessary assumption in all the studies
so far has been 
a {\it convexity assumption} on the domain, cf. \cite{errata}. 
However, below in Section~\ref{uniform}
we will present a numerical example which clearly indicates 
that this assumption should not be necessary. Thereafter, we will give a
theoretical justification for this claim. The basic tool for achieving 
this is to introduce the Bogovski\u{i} operator, cf. \cite{cos-mc}, as the
proper right inverse of the divergence operator in the continuous case.

The construction of uniform preconditioners for discretizations of systems of the form
\eqref{stokes1},
is intimately connection to the
well--posedness
properties of the continuous system, and the stability of the
discretization.
In fact, if we obtain appropriate $\eps$--independent bounds on the
solution operator, then the basic structure of a uniform
preconditioner
for the continuous system is an immediate consequence. Furthermore,
under the assumption of proper stability properties of the
discretizations,
the basic structure of uniform preconditioners for the discrete system
also follows. We refer to \cite{review} and references given there for 
a discussion of these issues.
The main tool for analyzing
the well--posedness properties of saddle--point problems of the form 
\eqref{stokes1} is the Brezzi conditions, cf. \cite{Brezzi,BrezziFortin}.
In particular, the desired
uniform bounds on the solution operator 
is closely tied to a  {\it uniform inf--sup condition} of the form 
\eqref{u-inf-sup} stated  below. Furthermore, the verification of such
uniform conditions are closely tied to the construction of uniformly
bounded projection operators which properly commute with the divergence
operator. In the present case, these projection operators have  to be
bounded both in 
$L^2$ and in $H^1$. In Section~\ref{discrete-problems} we will
construct such operators in the case of the Mini element and 
the Taylor--Hood element, where the latter construction is restricted
to quasi--uniform meshes in two space dimensions.

\section{Preliminaries}\label{prelim}
To state the proper uniform inf--sup condition for the system
\eqref{stokes1}
 we will need some
notation. 
If $X$ is a Hilbert space, then $\|\cdot \|_X$ denotes its norm.
We will use $H^m= H^m(\Omega)$ to denote the Sobolev space of 
functions on $\Omega$ with $m$ derivatives in $L^2 = L^2(\Omega)$. The
corresponding spaces for vector fields are denoted $H^m(\Omega;\R^n)$
and
$L^2(\Omega;\R^n)$. 
Furthermore, $\< \cdot , \cdot
\>$ is used to denote the inner--products in both $L^2(\Omega)$ and
$L^2(\Omega;\R^n)$, 
and it will also denote various duality pairings obtained by 
extending these inner--products. 
In general, we will use 
$H^m_0$ to denote  
the closure in $H^m$ of the space of smooth functions with compact
support in $\Omega$, and the dual 
space of $H^m_0$
with respect to the $L^2$ inner product by $H^{-m}$.
Furthermore, $L^2_0$ will denote the space of $L^2$ functions with mean 
value zero. We will use $\LL(X,Y)$ to denote the space of bounded
linear operators mapping elements of $X$ to $Y$, and if $Y= X$ we
simply write $\LL(X)$ instead of $\LL(X,X)$.

If $X$ and $Y$ are Hilbert spaces, both continuously contained in some
larger Hilbert space, then the intersection $X \cap Y$ and  the sum
$X+ Y$
are both Hilbert spaces with norms given by
\[
\|x \|_{X \cap Y}^2 = \|x \|_{X}^2 + \|x \|_{Y}^2 \quad \text{and }  
\|z \|_{X+Y}^2 = \inf_{\substack{x\in X, y \in Y\\z = x+y}} 
(\|x \|_{X}^2 + \|y \|_{Y}^2 ).
\]
Furthermore, if $X \cap Y$ are dense in both the Hilbert spaces 
$X$ and $Y$ then $(X \cap Y)^*= X^* + Y^*$ and $(X + Y)^*= X^* \cap
Y^*$, cf. \cite{b-l}.

The system \eqref{stokes1} admits the following weak formulation:

Find $(u,p) \in H_0^1(\Omega;\R^n) \x L^2_0(\Omega)$ such that
\begin{equation}\label{stokes2}
\begin{array}{rll}
\<u , v \> + \eps^2 \<Du, Dv \> &+\, \<p,\div v \> &= \< f,v\>, \quad v \in
H_0^1(\Omega;\R^n),\\
\<\div u, q \> &   &= \< g, q \>, \quad q \in 
L^2_0(\Omega)
\end{array}
\end{equation}
for given data $f$ and $g$. Here $Dv$ denotes the gradient of the 
vector field $v$. More compactly, we can write this system in the form
\begin{equation}\label{stokes3}
\A_{\eps}\begin{pmatrix} u\\ p \end{pmatrix} = \begin{pmatrix} f\\
  g \end{pmatrix}, \quad \text{where }
\A_{\eps} = \begin{pmatrix} I - \eps^2 \Delta & -\grad \\ \div & 0 \end{pmatrix}.
\end{equation}
For each fixed positive $\eps$ the coefficient operator $\A_{\eps}$ is an
isomorphism mapping $X= H_0^1(\Omega;\R^n) \x L^2_0(\Omega)$
onto $X^*= H^{-1}(\Omega;\R^n) \x L^2_0(\Omega)$. However, the operator
norm $\| \A_{\eps}^{-1} \|_{\LL(X^*,X)}$ will blow up as $\eps$ tends
to zero. 

To obtain a proper uniform bound on the operator norm for the solution
operator
$\A_{\eps}^{-1}$ we are forced to introduce $\eps$ dependent spaces
and norms. We define the spaces $X_{\eps}$ and $X_{\eps}^*$ by
\[
X_{\eps} = (L^2 \cap \eps H^1_0)(\Omega;\R^n) \x ((H^1\cap L^2_0) +
\eps^{-1}L_0^2)(\Omega) 
\]
and 
\[
X_{\eps}^* = (L^2 + \eps^{-1} H^{-1})(\Omega;\R^n) \x (H^{-1}_0 \cap \eps L^2_0)
(\Omega).
\]
Here $H^{-1}_0 \supset L^2_0$ corresponds to the  dual space of $H^1
\cap L^2_0$.
Note that the space $X_{\eps}$ is equal to $X$ as a set, but the
norm approaches the $L^2$--norm as $\eps$ tends to zero. 

Our strategy is to use the Brezzi conditions \cite{Brezzi,BrezziFortin} to claim that 
the operator norms
\begin{equation}\label{mapping-property} 
\| \A_{\eps}\|_{\LL(X_{\eps},X_{\eps}^*)} \quad \text{and } 
\| \A_{\eps}^{-1}\|_{\LL(X_{\eps}^*,X_{\eps})} \quad \text{are bounded independently
of } \eps. 
\end{equation}
In fact, the only nontrivial condition 
for obtaining this is that we need to verify the
uniform inf--sup condition 
\begin{equation}\label{u-inf-sup}
\sup_{v \in H_0^1(\Omega;\R^n)}\frac{\<\div v, q\>}{\| v \|_{L^2\cap
    \eps H^1}} \ge \alpha
\| q \|_{H^1 + \eps^{-1}L^2}, \quad q \in L_0^2(\Omega),
\end{equation}
where the positive constant $\alpha$ is independent of $\eps \in
(0,1]$. Of course, if $\eps > 0$ is fixed, and $\alpha$ is allowed to 
depend on $\eps$, then this is just 
equivalent to the standard inf--sup condition for the stationary
Stokes problem.

As explained, for example in \cite{review}, the mapping property
\eqref{mapping-property}
implies that the ``Riesz operator'' $\B_{\eps}$, mapping $X_{\eps}^*$
isometrically to $X_{\eps}$, is a uniform preconditioner for the operator
$\A_{\eps}$. More precisely, up to equivalence of norms the operator
$\B_{\eps}$ can be identified as 
the block diagonal and
positive definite operator $\B_{\eps} : X_{\eps}^* \to X_{\eps}$, given 
by 
\begin{equation}\label{u-preconditioner}
\B_{\eps} = \begin{pmatrix} (I - \eps^2 \Delta)^{-1} & \\ 0  &
  (-\Delta)^{-1}+ \eps^2 I \end{pmatrix}.
\end{equation}
This means that the preconditioned coefficient operator
$\B_{\eps}\A_{\eps}$ is a uniformly bounded family of operators on the
spaces $X_{\eps}$, with uniformly bounded inverses. Therefore, the
preconditioned system
\[
\B_{\eps}\A_{\eps} \begin{pmatrix} u \\p \end{pmatrix} = \B_{\eps}
\begin{pmatrix} f \\ g \end{pmatrix}
\]
can, in theory, be solved by a standard iterative method like a Krylov space
method, with a uniformly bounded convergence rate.  
We refer to \cite{review} for more details.
Of course, for practical computations we are really interested in 
the corresponding discrete problems. This will be further discussed 
in Section \ref{discrete-problems} below.

\section{The uniform inf--sup condition}\label{uniform}
The rest of this paper is devoted to verification of the uniform inf--sup condition
\eqref{u-inf-sup}, and its proper discrete analogs.
We start this discussion by considering the standard stationary
Stokes problem given by:

Find $(u,p) \in H_0^1(\Omega;\R^n) \x L^2_0(\Omega)$ such that
\begin{equation}\label{stokes}
\begin{array}{rll}
\<Du, Dv \> &+\, \<p,\div v \> &= \< f,v\>, \quad v \in
H_0^1(\Omega;\R^n),\\
\<\div u, q \> &   &= \< g, q \>, \quad q \in 
L^2_0(\Omega),
\end{array}
\end{equation}
where $(f,g) \in H^{-1}(\Omega; \R^n) \x L^2_0(\Omega)$. 
The unique solution
of this problem satisfies the estimate
\begin{equation}\label{stokes-bound}
\| u \|_{H^1} + \|p \|_{L^2} \le c \, (\|f \|_{H^{-1}} + \| g \|_{L^2}),
\end{equation}
cf. \cite{g-r}.
Furthermore, if the domain $\Omega$ is convex, $f \in
L^2(\Omega;\R^n)$, and $g = 0$ 
then $u \in H^2\cap H_0^1(\Omega;\R^n)$, $p \in H^1\cap
L^2_0(\Omega)$,
and 
an improved
estimate of the form
\begin{equation}\label{stokes-bound-improved}
\| u \|_{H^2} + \|p \|_{H^1} \le c \, \|f \|_{L^2},
\end{equation}
holds (\cite{Dauge}). 

We define  $R \in \LL(H^{-1}(\Omega, \R^n),L^2_0(\Omega))$ to be the 
solution of operator of the system \eqref{stokes}, with $g = 0$,  given by $f \mapsto
p = Rf$. 
Hence, if the domain $\Omega$ is convex,
this operator will also be a bounded map of $L^2(\Omega;\R^n)$ into $H^1\cap
L^2_0(\Omega)$.
Furthermore, let $S \in \LL(L^2_0(\Omega),H_0^1(\Omega;\R^n)$ denote 
the corresponding solution operator, defined by \eqref{stokes} with $f
= 0$,  given by $g \mapsto u = Sg$. Then $S$ is a right inverse of the
divergence operator, and the operator 
$S$ is the adjoint of $R$, since
\[
\< f, Sg \> = \<Rf, \div Sg \> + \< D u, D Sg \> = \< Rf, g \> + \<
p,\div u\> = \< Rf, g \>.
\]
Here $u$ and $p$ are components of the solutions of \eqref{stokes}
with data $(f,0)$ and $(0,g)$, respectively. 
As a consequence of the improved estimate
\eqref{stokes-bound-improved}, 
we can therefore conclude that if the domain $\Omega$ is convex then $S$ can be extended to 
an operator in $\LL( H_0^{-1}(\Omega), L^2(\Omega;\R^n))$.
In other words, in the convex case we have
\begin{equation}\label{bounded-right-inverse}
S \in \LL(L_0^2,H_0^1)\cap
\LL(H_0^{-1},L^2), \quad 
\text{and } \, \div Sg = g. 
\end{equation}
However, the existence of such a right inverse of the divergence
operator implies that the
uniform
inf--sup condition holds, since for any $q \in L_0^2(\Omega)$ we have
\[
\|q \|_{H^1 + \eps^{-1}L^2} = \sup_{g \in H_0^{-1}\cap \eps L^2}
\frac{\< g, q \>}{\| g \|_{H_0^{-1}\cap \eps L^2}}
\le c \sup_{g \in H_0^{-1}\cap \eps L^2}
\frac{\< \div S g, q \>}{\| Sg \|_{L^2\cap \eps H^1}}
\le c \sup_{v \in L^2\cap \eps H_0^1}
\frac{\< \div v, q \>}{\| v \|_{L^2\cap \eps H^1}}.
\]
On the other hand, if the domain $\Omega$ is not convex, 
then the estimate \eqref{stokes-bound-improved}
is not valid, and as a consequence, the operator $S$
cannot be extended to an operator in $\LL( H_0^{-1}(\Omega),
L^2(\Omega;\R^n))$. Therefore, the proof of the 
uniform inf--sup condition 
\eqref{u-inf-sup} outlined above breaks down in the nonconvex case.

\subsection{General Lipschitz domains}\label{general-domain}
The main purpose of this paper is to show that the problems 
encountered above for nonconvex domains are just technical problems
which can be overcome. As a consequence, preconditioners
of the form $\B_{\eps}$ given by \eqref{u-preconditioner}
will still behave as a uniform preconditioner in the nonconvex case.
To convince the reader that this is indeed a reasonable hypothesis we
will first present a numerical experiment. We consider the problem 
\eqref{stokes1} on three two dimensional domains, referred to as 
$\Omega_1$, $\Omega_2$ and $\Omega_3$. Here $\Omega_1$ is the unit
square, $\Omega_2$ is the L--shaped domain obtained by cutting out the
an upper right subsquare from $\Omega_1$, while $\Omega_3$ is the slit
domain where a slit of length a half is removed from from
$\Omega_1$, cf. Figure~\ref{domains}. Hence, only $\Omega_1$ is a convex domain.
\begin{figure}[htb]
\centerline{\includegraphics[width=2.5in]{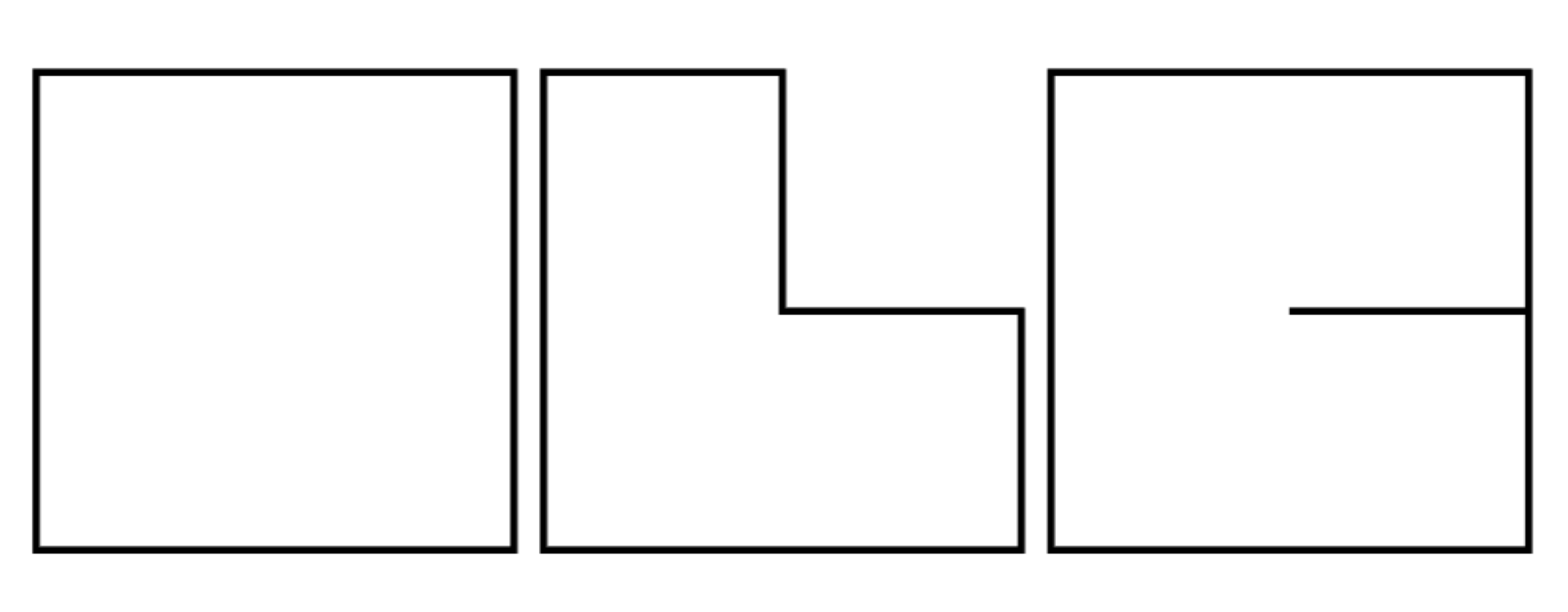}}
\caption{The domains $\Omega_1$, $\Omega_2$,
and $\Omega_3$.}
\label{domains}
\end{figure}
The corresponding problems \eqref{stokes1} were discretized by the
standard Taylor--Hood element on a uniform triangular grid to obtain a 
discrete anolog of this system \eqref{stokes3} on the form
\[
\A_{\eps,h}\begin{pmatrix} u_h \\ p_h \end{pmatrix}
= \begin{pmatrix} f_h \\ g_h \end{pmatrix}.
\]
Here the parameter $h$ indicates the mesh size.
We have computed the condition numbers of the operator 
$\B_{\eps,h}\A_{\eps,h}$ for different values of $\eps$ and $h$
for the three domains. The operator $\B_{\eps,h}$ is given as the
corresponding
discrete version of \eqref{u-preconditioner}, i.e., exact inverses of
the discrete elliptic operators appearing in \eqref{u-preconditioner}
are used. Hence, in the notation of \cite{review} a canonical
preconditioner is applied.
The results are given in Table~\ref{table1} below.

\begin{table}
\begin{center}
\begin{tabular}{|c|c||c|c|c|c|} \hline 
domain  & $\eps \backslash h$  &   $2^{-2}$  & $2^{-3}$ & $2^{-4}$ & $2^{-5}$ \\  \hline  
\multirow{3}{20mm}{$\quad\Omega_1$} 
      & 1 & 14.3 & 15.3 & 18.5 & 22.0 \\ 
      & 0.1 & 10.3& 11.9 & 12.7& 13.2 \\ 
      & 0.01 & 6.2 & 6.7 & 8.0 & 9.9 \\  \hline
\multirow{3}{20mm}{$\quad\Omega_2$} 
      & 1 & 17.1 & 17.2 & 17.1& 17.1 \\ 
      & 0.1 & 10.3 & 11.8 & 12.7 & 13.2  \\ 
      & 0.01 & 6.1 & 6.7 & 8.0 & 9.9 \\ \hline 
\multirow{3}{20mm}{$\quad\Omega_3$} 
      & 1 & 13.2 & 13.4 & 13.5 & 13.6 \\ 
      & 0.1 & 10.3 & 11.9 & 12.8 & 13.2  \\ 
      & 0.01 & 6.1 & 6.7 & 8.1 & 9.9 \\ \hline 
\end{tabular}
\caption{Condition numbers for the operators $\B_{\eps,h}\A_{\eps,h}$} 
\label{table1}
\end{center}
\end{table}
These results indicate clearly that the condition numbers 
of the operators $\B_{\eps,h}\A_{\eps,h}$
are not dramatically effected by lack of convexity of the domains.
Actually, we will show below that these condition numbers are indeed 
uniformly bounded both with respect to the perturbation parameter $\eps$
and the discretization parameter $h$.

We will now return to a verification of the 
inf--sup condition \eqref{u-inf-sup} for general Lipschitz domains.
The problem we encountered above in the nonconvex case is caused by
the lack of regularity of the solution operator for Stokes problem
on general Lipschitz domains. However, to establish \eqref{u-inf-sup}
we are not restricted to such solution operators. 
It should be clear from the discussion above that if 
we can find {\it any} operator $S$ satisfying condition
\eqref{bounded-right-inverse},
then \eqref{u-inf-sup} will hold. 
A proper operator which satisfy these conditions is  
the Bogovski\u{i} operator, see for example \cite[section
III.3]{galdi}, or \cite{cos-mc,g-h-h}.
On a domain $\Omega$, which is star shaped with respect to an open
ball $B$,  this operator is explicitly given as
an integral operator on 
the form
\[
Sg(x) = \int_{\Omega}g(y)K(x-y,y)\, dy, \quad \text{where } K(z,y) =
\frac{z}{|z|^n}\int_{|z|}^\infty \theta(y +r\frac{z}{|z|})r^{n-1} \, dr.
\]
Here $\theta \in C_0^\infty(\R^n)$ with 
\[
\supp \theta \subset B, \quad \text{and } \int_{\R^n} \theta(x) \, dx
= 1.
\]
This operator is a right inverse of the divergence operator, and it
has exactly  the desired
mapping properties given by \eqref{bounded-right-inverse}, cf. 
\cite{cos-mc,g-h-h}.
Furthermore, the definition of the right inverse $S$ can also be extended to general
bounded Lipschitz domains, by using the fact that such domains 
can be written as a finite union of star shaped domains. 
The constructed operator will again satisfy the properties given by
\eqref{bounded-right-inverse}.
We refer to 
\cite[section
III.3]{galdi}, \cite[section 2]{g-h-h}, and \cite[section 4.3]{cos-mc}
for more details. We can therefore conclude our discussion so far with the
following theorem.

\begin{thm}\label{main-cont}
Assume that $\Omega$ is a bounded Lipshitz domain. Then the uniform
inf--sup condition \eqref{u-inf-sup} holds.
\end{thm}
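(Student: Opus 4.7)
The plan is to reduce the uniform inf--sup condition to the existence of a right inverse $S$ of the divergence operator satisfying \eqref{bounded-right-inverse}, and then to exhibit such an $S$ on an arbitrary bounded Lipschitz domain using the Bogovski\u{i} construction. The chain of inequalities displayed just before the theorem statement already shows that \eqref{bounded-right-inverse} implies \eqref{u-inf-sup}; the only real work is therefore to produce $S$ without any convexity hypothesis.

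First, I would treat the case where $\Omega$ is star shaped with respect to some open ball $B$. Here $S$ is defined by the explicit integral formula recalled above, and the two mapping properties
\[
S \in \LL(L_0^2(\Omega), H_0^1(\Omega;\R^n)) \quad \text{and} \quad S \in \LL(H_0^{-1}(\Omega), L^2(\Omega;\R^n))
\]
together with $\div Sg = g$ for $g\in L^2_0$, are established in \cite{galdi,cos-mc,g-h-h}. The $L^2_0 \to H^1_0$ bound is the classical Bogovski\u{i} estimate, essentially a Calderon--Zygmund argument applied to the kernel $K(z,y)$, while the $H^{-1}_0\to L^2$ bound is a duality/interpolation sharpening of that argument. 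I would simply invoke these results rather than reprove them.

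Second, for a general bounded Lipschitz domain $\Omega$, I would use the standard fact that $\Omega$ can be written as a finite union $\Omega = \bigcup_{i=1}^N \Omega_i$ of Lipschitz subdomains, each star shaped with respect to some ball. Given $g \in L^2_0(\Omega)$ (or more generally $g \in H^{-1}_0$), one decomposes $g = \sum_i g_i$ where each $g_i$ is supported in $\Omega_i$ and has mean value zero on $\Omega_i$; this is done inductively, shifting the mean of $g\chi_{\Omega_i}$ into neighbouring subdomains along overlaps. Applying the local Bogovski\u{i} operator $S_i$ on each piece and setting $Sg = \sum_i \widetilde{S_i g_i}$, where the tilde denotes extension by zero, yields a global right inverse of $\div$. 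The bounds $\|S_i g_i\|_{L^2(\Omega_i)} \lesssim \|g_i\|_{H^{-1}_0(\Omega_i)}$ and $\|S_ig_i\|_{H^1_0(\Omega_i)} \lesssim \|g_i\|_{L^2_0(\Omega_i)}$, combined with the boundedness of the decomposition operator $g\mapsto (g_i)$ in both $H^{-1}$ and $L^2$, give \eqref{bounded-right-inverse} for $\Omega$; the technical verification is carried out in the references.

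Third, with such an $S$ in hand, I would plug it into the chain of estimates already displayed in the excerpt: by duality $\|q\|_{H^1+\eps^{-1}L^2} = \sup_g \<g,q\>/\|g\|_{H^{-1}_0\cap \eps L^2}$; then write $\<g,q\> = \<\div Sg, q\>$ and observe, by interpolating the two mapping properties, that $\|Sg\|_{L^2\cap \eps H^1} \le c\, \|g\|_{H^{-1}_0 \cap \eps L^2}$ uniformly in $\eps \in (0,1]$. Taking the supremum over all admissible velocities $v\in H^1_0(\Omega;\R^n)$ then yields \eqref{u-inf-sup} with a constant $\alpha$ depending only on $\Omega$. The main obstacle, and the one reason the convex case was singled out previously, is the $H^{-1}_0 \to L^2$ endpoint for $S$; here it is not derived from an elliptic regularity result for Stokes but rather from the Calderon--Zygmund analysis of the Bogovski\u{i} kernel, and this is precisely what makes the convexity hypothesis unnecessary.
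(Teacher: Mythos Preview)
Your proposal is correct and follows essentially the same approach as the paper: reduce \eqref{u-inf-sup} to the existence of a right inverse $S$ of $\div$ satisfying \eqref{bounded-right-inverse}, invoke the Bogovski\u{i} operator on star-shaped domains (citing \cite{cos-mc,g-h-h,galdi} for the mapping properties), extend to general Lipschitz domains via a finite union of star-shaped subdomains, and then apply the displayed chain of inequalities. You actually supply somewhat more detail on the decomposition step than the paper, which simply refers to the literature for the extension to general domains.
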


\section{Preconditioning the discrete coefficient operator}
\label{discrete-problems}
The purpose of this final section is to show discrete variants 
of Theorem~\ref{main-cont} for various 
finite element discretizations of the problem \eqref{stokes1}.
More precisely, we will consider finite element discretizations of the
system \eqref{stokes1} of the form:

Find $(u_h,p_h) \in V_h\x Q_h$ such that
\begin{equation}\label{stokes1-h}
\begin{array}{rll}
\<u_h , v \> + \eps^2 \<Du_h, Dv \> &+\, \<p_h,\div v \> &= \< f,v\>, \quad v \in
V_h,\\
\<\div u_h, q \> &   &= \< g, q \>, \quad q \in Q_h.
\end{array}
\end{equation}
Here $V_h$ and $Q_h$ are finite element spaces such that $V_h \x Q_h
\subset H_0^1(\Omega;\R^n) \x L^2_0(\Omega)$, and $h$ is the
discretization parameter. Alternatively, these
problems can be written on the form 
\[
\A_{\eps,h}\begin{pmatrix} u_h \\ p_h \end{pmatrix}
= \begin{pmatrix} f_h \\ g_h \end{pmatrix},
\]
where the coefficient operator $\A_{\eps,h}$ is acting on
elements of $V_h \x Q_h$. 
In the examples below we will, for simplicity, only consider
discretizations where the finite element space $V_h \times Q_h \subset
X_{\eps}$ for all $\eps$ in the closed interval $[0,1]$. This implies
that also the pressure space $Q_h$ is a subspace of $H^1$.
The proper discrete uniform 
inf--sup conditions we shall establish will be of the form
\begin{equation}\label{u-inf-sup-h}
\sup_{v \in V_h}\frac{\<\div v, q\>}{\| v \|_{L^2\cap
    \eps H^1}} \ge \alpha
\| q \|_{H^1 + \eps^{-1}L^2,h}, \quad q \in Q_h,
\end{equation}
where the positive constant $\alpha$ is independent of both $\eps$ and
$h$. Here the discrete norm $\|\cdot \|_{H^1 + \eps^{-1} L^2,h}$ is
defined as
\[
\|q \|_{H^1 + \eps^{-1} L^2,h}^2 = \inf_{\substack{q_1,q_2 \in
    Q_h\\q = q_1 + q_2}} 
(\|q_1 \|_{H^1}^2 + \eps^{-2}\|q_2 \|_{L^2}^2 ), \quad q \in Q_h.
\]
The technique we will use to establish the discrete inf--sup condition
\eqref{u-inf-sup-h}
is in principle rather standard. We will just rely on the
corresponding continuous condition \eqref{u-inf-sup} and a bounded
projection operator into the velocity space $V_h$. The key property
is that the projection operator $\Pi_h$ commutes properly with the
divergence operator, cf. \eqref{commute} below, and that it is
uniformly bounded
in the proper operator norm. Such projection operators are frequently
referred to as Fortin operators.

We
will restrict the discussion below to two key examples, the Mini
element and the Taylor--Hood element. For both these examples we will
construct interpolation operators $\Pi_h : L^2(\Omega;\R^n) \to V_h$
which are uniformly bounded, with respect to $h$,  in both $L^2$ and
$H_0^1$. Therefore,
these operators will be uniformly bounded operators in $\LL((L^2 \cap \eps
H_0^1)(\Omega;\R^n))$, i.e., we have 
\begin{equation}\label{bounded}
\| \Pi_h \|_{\LL((L^2 \cap \eps H_0^1)} \quad \text{is bounded
  independently of } \eps \, \text{and } h.
\end{equation}
It is easy to see that this is equivalent  to the requirement that 
$\Pi_h$ is uniformly bounded with respect to $h$ in $\LL(L^2)$ and $\LL(H_0^1)$.
Furthermore, the operators $\Pi_h$ will satisfy a commuting
relation of the form
\begin{equation}\label{commute}
\<\div \Pi_h v, q \> = \< \div v, q \> \quad v \in H_0^1(\Omega;\R^n), \,
q \in Q_h.
\end{equation}
As a consequence, the discrete uniform inf--sup condition \eqref{u-inf-sup-h}
will follow from the corresponding condition \eqref{u-inf-sup} in the
continuous case. We recall from Theorem~\ref{main-cont} that
\eqref{u-inf-sup}
holds for any bounded Lipschitz domain, and without any 
convexity assumption, and as a consequence of the analysis below the 
discrete condition \eqref{u-inf-sup-h} will also hold without any
convexity
assumptions. The key ingredient in the analysis below is the
construction of a uniformly bounded interpolation operator $\Pi_h$.
In the case of the Mini element the construction we will present is
rather standard, and resembles the presentation already done in 
\cite{a-b-f},  where  this element was originally proposed (cf. also 
\cite[Chapter VI]{BrezziFortin}). However, for the Taylor--Hood
element
the direct construction of a bounded, commuting interpolation operator is not
obvious. In fact, 
most of the stability proofs found in the literature for
this discretization typically
uses an alternative approach, cf. for example \cite[Section
VI.6]{BrezziFortin}
and the discussion given in the introduction of \cite{falk}.
An exception is \cite{falk}, where a projection
satisfying \eqref{commute} is constructed. However, 
this operator is not bounded in $L^2$.
Below we propose 
a new construction of projection operators satisfying \eqref{commute}
by utilizing a 
technique for the Taylor--Hood method which is similar to the 
construction for Mini element presented below. The new projection
operator
will be bounded in both $L^2$ and $H^1$, and hence it satisfies \eqref{bounded}.
This analysis is restricted to quasi--uniform meshes in two space dimensions.

In the present case, the discrete inf--sup condition \eqref{u-inf-sup-h}
will imply uniform stability of the discretization in the proper 
norms introduced above. 
As a consequence, we are able to derive preconditioners $\B_{\eps,h}$,
such that the condition numbers of the corresponding operators 
$\B_{\eps,h}\A_{\eps,h}$ are bounded uniformly with respect to the
perturbation parameter $\eps$ and the discretization parameter $h$.
The operator $\B_{\eps,h}$ can be taken as a block diagonal
operator of the form \eqref{u-preconditioner}, but where the elliptic
operators are replaced by the corresponding discrete
analogs.
In fact, to obtain an efficient preconditioner the inverses of the
elliptic operators which appear should be replaced 
by corresponding elliptic preconditioners, constructed for example by a
standard multigrid procedure. We refer to \cite{review}, see in
particular
Section 5 of that paper, 
for a discussion on the relation between stability estimates and the
construction of uniform preconditioners. In particular, the results
for the Taylor--Hood method presented below explains the uniform
behavior
of the preconditioner $\B_{\eps,h}$ observed in the numerical
experiment
reported in Table~\ref{table1} above.  

\subsection{The discrete inf--sup condition}
The rest of the paper is devoted to the construction of proper
interpolation operators $\Pi_h$ for the Mini element and the
Taylor--Hood element, i.e, we will construct interpolation operators
$\Pi_h : L^2(\Omega;\R^n) \to V_h$ such that \eqref{bounded} and
\eqref{commute} holds. 
We will assume that the domain $\Omega$ is a polyhedral domain
which is triangulated  by a family of shape regular, simplicial meshes
$\{\T_h \}$
indexed by
decreasing values of the mesh parameter $h = \max_{T\in \T_h} h_T$.
Here $h_T$ is the diameter of the simplex $T$. We recall that the mesh is shape regular
if the there exist a positive constant $\gamma_0$ such that for all values of the mesh parameter $h$
\[
h_T^n \le \gamma_0 |T|, \quad T \in T_h.
\]
Here $|T|$ denotes the volume of $T$.

\subsubsection{The Mini element}
We recall that for this element the velocity space, $V_h$, consists of 
linear combinations of continuous piecewise linear vector fields
and local bubbles. More precisely, $v \in V_h$
if and only if
\[
v = v^1 + \sum_{T \in \T_h}c_T b_T,
\]
where $v^1$ is a continuous piecewise linear vector field, $c_T \in \R^n$,
and $b_T \in \P_{n+1}(T)$ is the bubble function
with respect to $T$, i.e. the unique polynomial of degree $n+1$ which 
vanish on $\partial T$ and with $\int_T b_T\, dx = 1$.
The pressure space $Q_h$ is the standard space of continuous piecewise linear 
scalar fields.

In order to define the operator $\Pi_h$ we will utilize the fact that the
space $V_h$ can be decomposed into two subspaces, $V_h^b$,
consisting of all functions which are identical to zero
on all element boundaries, i.e. $V_h^b$ is the span of the bubble functions, 
and $V_h^1$
consisting of continuous piecewise linear vector fields.
Let $\Pi_h^b : L^2(\Omega;\R^n) \to V_h^b$ be defined by,
\[
\< \Pi^b_h v , z \> = \<v , z \> \qquad \forall z \in Z_h,
\]
where $Z_h$ denotes the space of piecewise constants vector fields.
Clearly this uniquely determines $\Pi^b_h$. Furthermore, a scaling
argument,
utilizing equivalence of norms, shows that the local operators $\Pi^b_h$
are uniformly bounded, with respect to $h$, in $L^2(\Omega;\R^n)$.

The operator $\Pi^b_h$ will satisfy property \eqref{commute}
since for all $v \in H^1_0(\Omega;\R^n)$ and $q \in Q_h$, we have
\begin{equation}\label{commute-bubble}
\<\div \Pi_h^b v,q\> = - \<\Pi_h^bv, \grad q\> = - \<v, \grad q \>
= \< \div v, q \>,
\end{equation}
where we have used that $\grad Q_h \subset Z_h$. 

The desired operator $\Pi_h$
will be of the form
\[
\Pi_h = \Pi_h^b(I - R_h) + R_h,
\]
where $R_h : L^2(\Omega;\R^n) \to V_h^1$ will be specified below.
Note that
\[
I- \Pi_h =  (I - \Pi_h^b)(I- R_h),
\]
and therefore
\[
\< \div (I-\Pi_h) v,q \> = \< \div (I-\Pi^b_h)(I- R_h) v,q \>)= 0
\]
for all $q \in Q_h$. Hence,
the operator $\Pi_h$ satisfies \eqref{commute}.

We will take $R_h$ to be the Clement interpolant onto piecewise linear
vector fields, cf. \cite{clement}. Hence, in particular, the operator $R_h$
is local, it preserves constants, and it is stable in $L^2$ and
$H_0^1$. More precisely, we have for any $T \in \T_h$ that
\begin{equation}\label{int-prop}
\| (I - R_h) v \|_{H^j(T)} \le c h_{\Omega_T}^{k-j} \|v \|_{H^k(\Omega_T)}, \qquad 0\le j \le k
\le 1,
\end{equation}
where the constant $c$ is independent of $h$ and $v$. 
Here $\Omega_T$ denote the macroelement consisting of $T$ and all
elements
$T' \in \T_h$ such that $T \cap T' \neq \emptyset$, and 
$h_{\Omega_T} = \max_{T\in \T_h, T \subset \Omega_h} h_T$.
It also follows from the shape regularity of the family
$\{\T_h \}$ that the covering $\{\Omega_T \}_{T \in \T_h}$ has a
bounded overlap. Therefore, it follows from \eqref{int-prop}
and the $L^2$ boundedness of $\Pi_h^b$ that $\Pi_h$ is uniformly
bounded 
in $\LL(L^2(\Omega;\R^n))$.
Furthermore, by combining \eqref{int-prop}
with a standard inverse estimate for polynomials we have for any $T
\in \T_h$ that 
\begin{align*}
\|\Pi_h v \|_{H^1(T)} &\le \| \Pi_h^b (I - R_h)v \|_{H^1(T)} + \| R_h v \|_{H^1(T)}\\
&\le c (h_T^{-1}\| \Pi_h^b (I - R_h)v \|_{L^2(T)} + \|  v \|_{H^1(T)}) \\
&\le c (h_T^{-1}\| (I - R_h)v \|_{L^2(T)} + \|  v \|_{H^1(T)} )\\
&\le c (h_T^{-1} h_{\Omega_T} + 1)\| v \|_{H^1(\Omega_T)}\\
&\le c \| v \|_{H^1(\Omega_T)},
\end{align*}
where we have used that $h_T^{-1} h_{\Omega_T}$ is uniformly bounded
by shape regularity.
This implies that $\Pi_h$ is uniformly bounded in $\LL(H_0^1(\Omega;\R^n))$.
We have therefore verified \eqref{bounded}. Together with
\eqref{commute} this implies \eqref{u-inf-sup-h}.  
In Table~\ref{table2} below we present results for the Mini element
which are completely parallel to results for the Taylor--Hood element 
presented in Table~\ref{table1} above. As we can see, 
by comparing the results of the two tables, the effect of
the different discretizations seems to minor, as long as the mesh is
the same.

\begin{table}
\begin{center}
\begin{tabular}{|c|c||c|c|c|c|} \hline 
domain  & $\eps \backslash h$  &   $2^{-2}$  & $2^{-3}$ & $2^{-4}$ & $2^{-5}$ \\  \hline  
\multirow{3}{20mm}{$\quad\Omega_1$} 
      & 1 & 26.6 & 24.7 & 25.7 & 27.0 \\ 
      & 0.1 & 9.5 & 12.7 & 15.1 & 17.0 \\ 
      & 0.01 & 3.4 & 4.0 & 5.7 & 9.2 \\  \hline
\multirow{3}{20mm}{$\quad\Omega_2$} 
      & 1 & 27.0 & 20.8 & 19.2 & 18.6 \\ 
      & 0.1 & 9.0 & 12.3 & 15.1 & 16.7  \\ 
      & 0.01 & 3.4 & 4.0 & 5.5 & 8.3 \\ \hline 
\multirow{3}{20mm}{$\quad\Omega_3$} 
      & 1 & 15.8 & 17.3 & 17.8 & 17.9 \\ 
      & 0.1 & 8.8 & 12.4 & 15.1 & 16.7  \\ 
      & 0.01 & 3.4 & 4.0 & 5.5 & 8.3 \\ \hline 
\end{tabular}
\caption{Condition numbers for the operators $\B_{\eps,h}\A_{\eps,h}$ discretized with the Mini element.} 
\label{table2}
\end{center}
\end{table}

\subsubsection{The  Taylor--Hood element}
Next we will consider the classical Taylor--Hood element.
We will restrict the discussion to two space dimensions, and we will
assume that the family of meshes $\{ \T_h \}$ is quasi--uniform.
More precisely, we assume that there is a mesh independent constant
$\gamma_1 > 0$ such that 
\begin{equation}\label{quasi-uniform}
h_T \ge \gamma_1 \, h, \quad T \in \T_h,
\end{equation}
where we recall that $h = \max_T h_T$.
For the Taylor--Hood element the velocity space, $V_h$, consists of 
continuous piecewise quadratic vector fields,
and as for the Mini element above $Q_h$ is the 
standard space of continuous piecewise linear 
scalar fields. Note that if we have established 
the discrete inf--sup condition for a pair of spaces $(V_h^-, Q_h)$,
where $V_h^-$ is a subspace of $V_h$, then this condition will also 
hold for the pair $(V_h,Q_h)$. This observation will be utilized here.

For technical reasons we will assume in the rest of this section that
\emph{any 
$T \in \T_h$ has at most 
one edge in $\partial \Omega$.} Such an assumption is frequently made
for convenience when the Taylor--Hood element is analyzed, cf. for example \cite[Proposition
6.1]{BrezziFortin}, since most approaches requires a special
construction near the boundary. On the other hand, this assumption
will not hold for many simple triangulations. Therefore, in
Section~\ref{remove} below we will refine our analysis, and, as a
consequence, this 
assumption will be relaxed.

We let $V_h^- \subset V_h \subset H_0^1(\Omega;\R^2)$ be the space of piecewise quadratic vector
fields which has the property that on each edge of the mesh the normal
components
of elements in $V_h^-$ are linear. Each function $v$ in the space $V_h^-$ 
can be determined from its values at each interior vertex of the mesh, and of the
mean value of the tangential component along each interior edge.
In fact, in analogy with the discussion of the Mini element above,
the space $V_h^-$ can be decomposed as $V_h^- = V_h^1 \oplus V_h^b$.
As above the space $V_h^1$ is the space of continuous piecewise linear
vector fields, while the space $V_h^b$ in this case is spanned by
quadratic 
``edge bubbles.'' To define this space of bubbles we let
\[
\Delta_1(\T_h) =\Delta_1^i(\T_h)\cup \Delta_1^{\partial}(\T_h)
\]  
be the set of the edges of the mesh
$\T_h$, where $\Delta_1^i(\T_h)$ are the interior edges and 
$\Delta_1^{\partial}(\T_h)$ are the edges on the boundary of $\Omega$.
Furthermore, if $T \in \T_h$ then $\Delta_1(T)$ are the set of edges of $T$,
and $ \Delta_1^i(T)= \Delta_1(T)\cap \Delta_1^i(\T)$.

For each $e \in \Delta_1(\T_h)$ 
we let $\Omega_e$ be the associated macroelement consisting of the
union of all $T \in \T_h$ with $e \in \Delta_1(T)$.
The scalar function $b_e$ is the unique continuous and piecewise quadratic function on $\Omega_e$
which vanish on the boundary of $\Omega_e$, and with $\int_{e}
b_e \, ds = |e|$, where $|e|$ denotes the length of $e$.
The space $V_h^b$ is defined as
\[
V_h^b = \sp \{b_et_e \, | \, e \in \Delta_1^i(\T_h) \, \},
\]
where $t_e$ is a tangent vector along $e$ with length $|e|$. 
Alternatively, if $x_i$ and $x_j$ are the vertices corresponding to
the endpoints of $e$ then the vector field $\psi_e = b_et_e$ is
determined up to a sign as
$\psi_e = 6\lambda_i\lambda_j(x_j -x_i)$, where $\{\lambda_i \}$ are
the piecewise linear functions corresponding to the barycentric
coordinates,
i.e., $\lambda_i(x_k) = \delta_{i,k}$ for all vertices $x_k$. In
particular,
\[
\int_e \psi_e \cdot (x_j - x_i) \, ds = 6\int_e \lambda_i\lambda_j \,
ds |e|^2 = |e|^3.
\]
As above the desired interpolation operator $\Pi_h$
will be of the form
\[
\Pi_h = \Pi_h^b(I - R_h) + R_h,
\]
where $R_h : L^2(\Omega;\R^3) \to V_h^1$  is the same Clement operator
as above, and where $\Pi_h^b: L^2(\Omega;\R^n) \to V_h^b$ 
needs to be specified. In fact, to perform a construction similar to
the one we did for the Mini element it will be sufficient 
to construct $\Pi_h^b$ such that it is $L^2$--stable, and satisfies the
commuting relation \eqref{commute-bubble}.

We will need to separate the triangles which have an edge on the
boundary of $\Omega$ from the interior triangles. With this purpose  we define
\[
\T_h^{\partial} = \{ T \in \T_h \, | \, T \cap \partial \Omega \in
\Delta_1^{\partial}(\T_h) \, \} \quad \text{and} \quad \T_h^i = \T_h
\setminus \T_h^{\partial}.
\]  
In order to define the operator $\Pi_h^b$ 
we introduce $Z_h$ as the lowest order Nedelec space with respect to
the mesh $\T_h$. Hence, if $z \in Z_h$ then on any $T \in \T_h$,
$z$ is a linear  vector field such that $z(x) \cdot x$ is also
linear. Furthermore, for each $e \in \Delta_{1}(\T_h)$ the
tangential component of $z$ is continuous. As a consequence,
$Z_h \subset H(\curl;\Omega)$ where the operator $\curl$ denotes the 
two dimensional analog of the curl--operator given by 
\[
\curl z = \curl (z_1,z_2) = \partial_{x_2}z_1 - \partial_{x_1}z_2.
\]
It is well known that 
the proper degrees of freedom for the space $Z_h$ is the mean value 
of the tangential components of $v$, $v \cdot t$,  with respect to
each edge in $\Delta_1(\T_h)$.
Furthermore, we let
\[
Z_h^0 = \{ z \in Z_h \, | \, \int_{\partial T} z\cdot t \, ds = 0, \,
T \in \T_h^{\partial} \, \}.
\]
Alternatively, the elements of $Z_h^0$ are those vector fields in
$Z_h$
with the property that $\curl z|_{T} = 0$ if $T \in \T_h^{\partial}$, i.e.,
$z$ is a constant vector field on $T$ for $T \in \T_h^{\partial}$.
It is a key observation that the mesh assumption given above, that any
$T \in \T_h$ intersects $\partial \Omega$ in at most one edge,
implies that the spaces $V_h^b$ and $Z_h^0$ have the
same dimension. Furthermore, we note that $\grad q \in Z_h^0$ for any
$q \in Q_h$. 

For each $e \in \Delta_1(\T_h)$ let $\phi_e \in Z_h$ be the basis function
corresponding to the Whitney form, i.e., $\phi_e$ satisfies 
\[
\int_e (\phi_e \cdot t_e) \, ds = |e|, \quad \text{and } \int_{e'}
(\phi_e \cdot t_{e'})\, ds = 0, \quad e' \neq e,
\]
where, as above, $t_e$ is a tangent vector of length $e$. Hence, if
$e = (x_i,x_j)$ then the vector field $\phi_e$ can be expressed in
barycentric coordinates as
\[
\phi_{e} = \lambda_i \grad \lambda_j -\lambda_j \grad \lambda_i.
\] 
Any $z \in Z_h^0$ can be written uniquely on the form 
\[
z = \sum_{e \in \Delta_1^i(\T_h)} a_e \phi_e + \sum_{e \in
  \Delta_1^{\partial}(\T_h)} c_e \phi_e,
\]
where the coefficients $a_e$ corresponding to interior edges can be
chosen arbitrarily, but  where the coefficients $c_e$ for each
boundary edge should be chosen such that $\curl z = 0$ on the 
associated triangle in $\T_h^{\partial}$.
We note that there is a natural mapping $\Phi_h$ between the spaces $V_h^b$ and
$Z_h^0$  
given by $\Phi_h(\psi_e) = \phi_e$ for all interior edges, or alternatively,
\begin{equation}\label{edgeDOF}
\int_e \Phi_h(v)\cdot t_e \, ds = |e|^{-2}\, \int_e v \cdot t_e \, ds, \quad e \in \Delta_1^i(\T_h).
\end{equation}
Below, we will use $V_h^b(T)$ and $Z_h^0(T)$ to denote the restriction of the spaces $V_h^b$ and $Z_h^0$
to a single element $T$, and $\Phi_T$ 
will denote the corresponding restriction of the map $\Phi_h$.

We will define the operator $\Pi_h^b : L^2(\Omega;\R^n) \to V_h^b$ by,
\begin{equation}\label{pi-b-def}
\< \Pi^b_h u , z \> = \< u , z \> \qquad \forall z \in Z_h^0.
\end{equation}
To show that this operator is well--defined the following general formula
for integration of 
products of barycentric coordinates
over a triangle $T$ will be useful
(cf. for example \cite[Section 2.13]{lay-schu})
\begin{equation}\label{bary-int}
\int_T\lambda_1^{\alpha_1}\lambda_2^{\alpha_2}\lambda_3^{\alpha_3} \,
dx 
= \frac{2 \alpha!}{(2 + |\alpha|)!}|T|,
\end{equation}
where 
$\alpha! = \alpha_1!\alpha_2!\alpha_3!$, $|\alpha| = \sum_i \alpha_i$
and $|T|$ is the area of $T$.

\begin{lem}\label{triangle1}
Let $T \in \T_h^i$ with edges $e_1,e_2,e_3$. For any $v \in V_h^b(T)$ we have 
\[
\int_T v \cdot \Phi_T(v) \, dx \ge \frac{1}{5}|a|^2|T|,
\]
where $\Phi_T = \Phi_h|_T$, $v= \sum_i a_i\psi_{e_i}$ and $|a|^2 = \sum_i a_i^2$.
\end{lem}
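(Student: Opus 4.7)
The plan is to compute the $3 \times 3$ Gram--type matrix $M$ with entries
$M_{ab} = \int_T \psi_{e_a} \cdot \phi_{e_b}\, dx$, so that
$\int_T v \cdot \Phi_T(v)\, dx = a^T M a$, and then to show
$M \ge \tfrac{|T|}{5}I$ by explicit diagonalization.

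First I would fix a cyclic labeling, letting $e_a$ have endpoints $x_{a+1},x_{a+2}$ with indices modulo $3$, so that
$\psi_{e_a} = 6\lambda_{a+1}\lambda_{a+2}(x_{a+2} - x_{a+1})$ and
$\phi_{e_a} = \lambda_{a+1}\grad\lambda_{a+2} - \lambda_{a+2}\grad\lambda_{a+1}$, taken with consistent orientation so that $\Phi_T(\psi_{e_a}) = \phi_{e_a}$. The key simplification is the affine identity
\[
(x_k - x_j)\cdot \grad \lambda_i \;=\; \lambda_i(x_k) - \lambda_i(x_j) \;\in\; \{-1,0,1\},
\]
which reduces every entry of $M$ to a linear combination of barycentric moments, each of which is evaluated via \eqref{bary-int}. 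Using $\int_T \lambda_i^2\lambda_j\,dx = |T|/30$ and $\int_T \lambda_1\lambda_2\lambda_3\,dx = |T|/60$, a short computation gives
\[
M_{aa} = \frac{2|T|}{5}, \qquad M_{ab} = -\frac{|T|}{10} \;\text{ for } a \ne b.
\]

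In particular $M$ is symmetric and can be written as
\[
M \;=\; \frac{|T|}{10}\bigl(5\,I - J\bigr),
\]
where $J$ is the $3 \times 3$ all--ones matrix. Since $J$ has eigenvalues $3,0,0$, the spectrum of $M$ is $\{|T|/5,\ |T|/2,\ |T|/2\}$, and the smallest eigenvalue is $|T|/5$. Therefore $a^T M a \ge (|T|/5)\,|a|^2$, which is the asserted inequality.

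The only subtle point I anticipate is orientation bookkeeping: the basis elements $\psi_{e_a}$ and $\phi_{e_a}$ are defined only up to a sign, and one must take the signs consistently so that the defining relation \eqref{edgeDOF} of $\Phi_T$ holds. Flipping the orientation of a single edge rescales one row and the corresponding column of $M$ by $-1$, which does not change its eigenvalues; so the bound is independent of the orientation choice, and the lemma follows for every interior triangle $T \in \T_h^i$.
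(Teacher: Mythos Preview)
Your proof is correct and follows essentially the same approach as the paper: compute the $3\times 3$ matrix $M$ via the barycentric integration formula \eqref{bary-int}, obtaining diagonal entries $2|T|/5$ and off--diagonal entries of size $|T|/10$, and then bound the smallest eigenvalue by $|T|/5$. The only difference is in the final step, where the paper invokes the Gershgorin circle theorem while you write $M = \tfrac{|T|}{10}(5I - J)$ and read off the exact spectrum $\{|T|/5,\,|T|/2,\,|T|/2\}$; your orientation remark correctly justifies that the signs of the off--diagonal entries (and hence this representation) can be arranged by a consistent cyclic choice without affecting the quadratic form.
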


\begin{proof}
A direct computation gives
\[
\int_T v \cdot \Phi_T(v) \, dx =  
\sum_{i=1}^3\sum_{j=1}^3 a_i a_j \,
\int_T \psi_{e_i} \cdot\phi_{e_j} \, dx =
\sum_{i=1}^3\sum_{j=1}^3 a_i a_j \,
\int_T b_{e_i} \phi_{e_j} \cdot t_{e_i}\, dx = a^T M a,
\]
where the $3 \times 3$ matrix $M$ is given by
\[
M = \{M_{i,j} \}_{i,j =1}^3 = \{\int_T \psi_i \cdot  \phi_{e_j} \, dx \}_{i,j =1}^3
\{\int_T b_{e_i} \phi_{e_j} \cdot t_{e_i}\, dx \}_{i,j =1}^3,
\]
The desired result will follow from the diagonal dominance of this matrix.

Let $x_j$ be the vertex opposite $e_j$, and let $\lambda_j$ be the 
corresponding barycentric coordinate on $T$.
Then the first diagonal element $M_{1,1}$ of the matrix $M$ is given by
\[
M_{1,1} = 6\, \int_T \lambda_2 \lambda_3 (\lambda_2 \grad \lambda_3  - \lambda_3 \grad \lambda_2)(x_3 - x_2) \, dx
= 6\,\int_T  (\lambda_2^2 \lambda_3 + \lambda_2 \lambda_3^2) \, dx = 2|T|/5,
\]
where we have used formula \eqref{bary-int} in the final step.
Actually, from 
this formula we derive that all the diagonal elements are given by 
$M_{i,i} = 2|T|/5$, and
similar calculations for the off--diagonal elements gives
$|M_{i,j}|= |T|/10$. In addition, the matrix $M$ is symmetric. 
The matrix $M$ is therefore strictly
diagonally dominant, and by the Gershgorin circle theorem all
eigenvalues are bounded below by $|T|/5$. By combining this with the
fact that $M$ is symmetric we conclude that 
$a^TMa \ge |a|^2|T|/5$, and this is the desired bound.
\end{proof}

The next lemma  is a variant of the result above for $T \in \T_h^{\partial}$.
\begin{lem}\label{triangle2}
Let $T \in \T_h^\partial$ with interior edges $e_1,e_2$, and where $e_3$ is the edge on the boundary.
For any $v \in V_h^b(T)$ we have 
\[
\int_T v \cdot \Phi_T(v) \, dx \ge  \frac{1}{2}|a|^2|T|,
\]
where $v =a_1\psi_{e_1}  + a_2\psi_{e_2}$ and $|a|^2 = a_1^2 + a_2^2$.
\end{lem}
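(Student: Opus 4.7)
The plan is to mirror the proof of Lemma \ref{triangle1}, writing $\int_T v \cdot \Phi_T(v)\, dx = a^T M a$ for the $2 \times 2$ Gram--type matrix
\[
M_{i,j} = \int_T \psi_{e_i} \cdot \Phi_T(\psi_{e_j})\, dx, \qquad i,j \in \{1,2\},
\]
and then showing that $M$ is strictly diagonally dominant with a Gershgorin bound giving eigenvalues at least $|T|/2$. The key structural difference from the interior case is that for $T \in \T_h^{\partial}$ the space $Z_h^0(T)$ consists, as noted just before the definition \eqref{pi-b-def}, only of \emph{constant} vector fields on $T$, not of general Nedelec functions. Consequently $\Phi_T(\psi_{e_j})$ is a single constant vector, and the formula for the basis functions $\phi_{e}$ in terms of barycentric coordinates is replaced by a simpler prescription.

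First I would identify $\Phi_T(\psi_{e_j})$ explicitly. Since $V_h^b(T)$ and $Z_h^0(T)$ are both two--dimensional and the defining relation \eqref{edgeDOF} is imposed only on the two interior edges $e_1,e_2$, the vector $\Phi_T(\psi_{e_j})$ is the unique constant $c \in \R^2$ satisfying
\[
|e_i|\, c \cdot t_{e_i} = |e_i|^{-2} \int_{e_i} \psi_{e_j} \cdot t_{e_i}\, ds, \qquad i = 1,2.
\]
Because $\psi_{e_j}$ is an edge bubble supported tangentially to $e_j$ with $\int_{e_j}\psi_{e_j}\cdot t_{e_j}\,ds = |e_j|^3$ (and vanishing on the other edges), this gives $c \cdot t_{e_j} = 1$ and $c \cdot t_{e_i} = 0$ for $i \neq j$, which determines $c$ from the non--degenerate pair $(t_{e_1},t_{e_2})$.

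Next I would compute the four entries of $M$ directly using the barycentric formula \eqref{bary-int}. With $\psi_{e_i} = 6 \lambda_j \lambda_k (x_k - x_j)$ (for $\{i,j,k\} = \{1,2,3\}$ and $e_3$ the boundary edge) and $\Phi_T(\psi_{e_j})$ a constant determined as above, the integrals reduce to moments of $\lambda_j\lambda_k$ times geometric factors involving the edge vectors, and a short calculation should yield $M_{i,i} \ge |T|$ and $|M_{i,j}| \le |T|/2$ for $i \neq j$ (the precise fractions will fall out of \eqref{bary-int}, and the constants will differ from the interior case precisely because the target space has collapsed to constants).

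Finally I would invoke symmetry of $M$ and the Gershgorin circle theorem to conclude that the eigenvalues of $M$ are bounded below by $|T|/2$, so $a^T M a \ge |a|^2 |T|/2$, which is the stated estimate. The main obstacle I anticipate is purely computational: one must carry out the barycentric integrals carefully and verify that the resulting bounds on the diagonal and off--diagonal entries are sharp enough to deliver the improved constant $1/2$ (rather than the $1/5$ of Lemma \ref{triangle1}), which reflects the fact that $\Phi_T$ has been restricted to a smaller target space and so behaves more rigidly on boundary triangles.
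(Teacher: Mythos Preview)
Your approach is correct and essentially matches the paper's. The one simplification you are missing is that the matrix $M$ turns out to be exactly \emph{diagonal}: once you identify $\Phi_T(\psi_{e_1}) = -\grad\lambda_2$ and $\Phi_T(\psi_{e_2}) = -\grad\lambda_1$ (which your edge conditions correctly determine), the cross terms vanish because $\psi_{e_i}$ is parallel to $t_{e_i}$ while $\grad\lambda_i \cdot t_{e_i} = 0$. The diagonal entries then compute via \eqref{bary-int} to exactly $|T|/2$ (not $\ge |T|$ as you guess), so the paper obtains the \emph{equality} $\int_T v\cdot\Phi_T(v)\,dx = \tfrac12|a|^2|T|$ directly, with no need for a Gershgorin argument.
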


\begin{proof}
Let $v =a_1\psi_{e_1}  + a_2\psi_{e_2} = 6a_1\lambda_2\lambda_3(x_3 - x_2) + 6a_2\lambda_1\lambda_3(x_3 - x_1)$.
It is a key observation that in this case $\Phi_T(v)$ is simply given
as
$\Phi(v) = -a_1\grad \lambda_2  - a_2\grad \lambda_1$. 
Since $\grad \lambda_i \cdot t_{e_i} \equiv 0$ we therefore obtain
from \eqref{bary-int} that
\begin{align*}
\int_T v \cdot \Phi_T(v) \, dx &= 6a_1^2 \int_T \lambda_2\lambda_3 \grad \lambda_2
\cdot (x_2 - x_3) \, dx
+ 6a_2^2 \int_T \lambda_1\lambda_3 \grad \lambda_1 \cdot (x_1-x_3) \, dx\\
&= 6 a_1^2\int_T \lambda_2 \lambda_3 \, dx 
+ 6 a_2^2 \int_T \lambda_1 \lambda_3 \, dx 
= \frac{1}{2}|a|^2|T|.
\end{align*}
This completes the proof.
\end{proof}

\begin{lem}\label{inf-sup-int}
There is a positive constant $c_0$, independent of $h$, such that for each $v
\in V_h^b $
\[
\sup_{z \in Z_h^0} \frac{\< v,z \>}{\| z \|_{L^2(\Omega)}} \ge c_0 \| v
\|_{L^2(\Omega)}.
\]
\end{lem}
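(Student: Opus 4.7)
The natural test function is $z = \Phi_h(v) \in Z_h^0$, and the plan is to show that $\langle v, \Phi_h(v)\rangle \ge c_0 \|v\|_{L^2(\Omega)} \|\Phi_h(v)\|_{L^2(\Omega)}$ with $c_0$ independent of $h$. For each $T \in \T_h$, write $v|_T = \sum_{e \in \Delta_1^i(T)} a_e \psi_e$ and let $|a_T|^2 = \sum_{e \in \Delta_1^i(T)} a_e^2$. Lemmas~\ref{triangle1} and~\ref{triangle2} then yield the elementwise bound
\[
\int_T v \cdot \Phi_T(v)\, dx \ge c_1 |a_T|^2 |T|,
\]
with $c_1 = 1/5$ or $1/2$, so summing gives $\langle v, \Phi_h(v)\rangle \ge c_1 \sum_T |a_T|^2 |T|$.

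Next, I would translate the coefficient vector $a_T$ into $L^2$-information via scaling. The explicit formula $\|\psi_e\|_{L^2(T)}^2 = \tfrac{2}{5}|e|^2|T|$, shape regularity, and equivalence of norms on the finite-dimensional spaces $V_h^b(T)$ and $Z_h^0(T)$ give
\[
\|v\|_{L^2(T)}^2 \le C_1 h_T^2 \,|a_T|^2 |T|, \qquad \|\Phi_T(v)\|_{L^2(T)}^2 \le C_2 h_T^{-2} \,|a_T|^2|T|,
\]
where the second bound uses $|T|\approx h_T^2$ (shape regularity) together with $\|\phi_e\|_{L^2(T)} \le C$ and $\|\grad\lambda_i\|_{L^2(T)} \le C$ (the latter is needed for boundary triangles, where $\Phi_T(v)$ has the explicit form from the proof of Lemma~\ref{triangle2}). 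Equivalently,
\[
|a_T|^2 |T| \ge C_1^{-1} h_T^{-2} \|v\|_{L^2(T)}^2 \quad \text{and} \quad |a_T|^2 |T| \ge C_2^{-1} h_T^{2} \|\Phi_T(v)\|_{L^2(T)}^2.
\]

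The main obstacle is that summing these two elementwise bounds and then applying Cauchy--Schwarz would go the wrong way, since $\sum_T \|v\|_{L^2(T)}\|\Phi_T(v)\|_{L^2(T)} \le \|v\|_{L^2}\|\Phi_h(v)\|_{L^2}$. To circumvent this, I would sum the two bounds \emph{separately}, and this is precisely where the quasi-uniformity assumption~\eqref{quasi-uniform} enters. The first inequality, summed over $T$, gives
\[
\sum_T |a_T|^2 |T| \ge C_1^{-1} \max_T h_T^{-2} \cdot \|v\|_{L^2(\Omega)}^2 \ge C_1^{-1} h^{-2} \|v\|_{L^2(\Omega)}^2,
\]
while the second, combined with $h_T \ge \gamma_1 h$, yields
\[
\sum_T |a_T|^2 |T| \ge C_2^{-1} \gamma_1^{2} h^{2} \|\Phi_h(v)\|_{L^2(\Omega)}^2.
\]

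Finally, multiplying these two lower bounds for $\langle v, \Phi_h(v)\rangle$ and taking a square root gives
\[
\langle v, \Phi_h(v)\rangle^2 \ge c_1^2 C_1^{-1} C_2^{-1} \gamma_1^{2} \,\|v\|_{L^2(\Omega)}^2 \|\Phi_h(v)\|_{L^2(\Omega)}^2,
\]
so with $c_0 = c_1 \gamma_1 / \sqrt{C_1 C_2}$ we obtain $\langle v, \Phi_h(v)\rangle \ge c_0 \|v\|_{L^2(\Omega)}\|\Phi_h(v)\|_{L^2(\Omega)}$. Dividing by $\|\Phi_h(v)\|_{L^2(\Omega)}$ delivers the claimed inf--sup inequality with $c_0$ independent of $h$.
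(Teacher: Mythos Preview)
Your approach is essentially the paper's: choose $z = \Phi_h(v)$, invoke the elementwise bounds of Lemmas~\ref{triangle1}--\ref{triangle2}, and combine with the scaling equivalences $\|v\|_{L^2(T)}^2 \sim |T|^2|a_T|^2$ and $\|\Phi_T(v)\|_{L^2(T)}^2 \sim |a_T|^2$ (recorded in the paper as \eqref{norm-z}--\eqref{norm-v}); both arguments need quasi-uniformity in the final step, though the paper leaves this implicit while you make it explicit.

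One slip to fix: in your first displayed sum you write $\max_T h_T^{-2}$, but pulling a scalar out of $\sum_T h_T^{-2}\|v\|_{L^2(T)}^2$ from \emph{below} yields $\min_T h_T^{-2}$, not $\max_T h_T^{-2}$. Fortunately $\min_T h_T^{-2} = (\max_T h_T)^{-2} = h^{-2}$, so your stated bound $\sum_T|a_T|^2|T|\ge C_1^{-1}h^{-2}\|v\|_{L^2(\Omega)}^2$ is still correct and the remainder of the argument goes through unchanged.
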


\begin{proof}
Let $v \in V_h^b$ be given, i.e., 
$v = \sum_{e \in \Delta_1^i(\T_h)} a_e \psi_e$.
We simply choose the 
corresponding $z = \Phi_h(v) = \sum_{e \in \Delta_1^i(\T_h)}a_e
\phi_e + \sum_{e \in \Delta_1^\partial(\T_h)} c_e \phi_e\in Z_h^0$. 
It follows from scaling and shape regularity that the two norms of 
$z$, given by 
\begin{equation}\label{norm-z}
\| z \|_{L^2(\Omega)} \quad \text{and} \quad  (\sum_{T \in
  \T_h}\sum_{e \in \Delta_1^i(T)}a_e^2)^{1/2}
\end{equation}
are equivalent uniformly in $h$.
Correspondingly, the two norms
\begin{equation}\label{norm-v}
\| v \|_{L^2(\Omega)} \quad \text{and} \quad  (\sum_{T \in
  \T_h}|T|^2\sum_{e \in \Delta_1^i(T)}a_e^2)^{1/2}
\end{equation}
are uniformly equivalent. As a consequence of 
these properties, combined with Lemmas \ref{triangle1} and \ref{triangle2},
we obtain
\[
\< v,\Phi_h(v) \> = \sum_{T \in \T_h} \int_T  \Phi_T(v)\cdot v\, dx
\ge \frac{1}{5} \sum_{T \in \T_h} |T| \sum_{e \in \Delta_1^i(T)}a_e^2
\ge c_0  \| \Phi_h(v) \|_{L^2(\Omega)} \, \| v \|_{L^2(\Omega)}.
\]
where $c_0>0$ is independent of $h$.
This completes the proof.
\end{proof}

It is a direct consequence of Lemma~\ref{inf-sup-int} that the
operators $\Pi_h^b$ are uniformly bounded in $\LL(L^2(\Omega; \R^2))$. In
fact, the associated operator norm is bounded by $c_0^{-1}$. 
Note that in contrast to the situation for the Mini element,
the operator $\Pi_h^b$ is not local in this case. However,
if the mesh is quasi--uniform we obtain from \eqref{quasi-uniform}
that
\begin{equation}\label{q-uniform-est}
\| \Pi_h^b u \|_{H^1(\Omega)} \le c \big( \sum_{T \in \T_h} h_T^{-2}\|
\Pi_h^b u \|_{L^2(T)}^2 \big)^{1/2} \le c \gamma_1^{-1}h^{-1}\|
\Pi_h^bu\|_{L^2(\Omega)}.
\end{equation}
As a further consequence we now obtain.

\begin{thm}\label{main}
The operator $\Pi_h$ satisfies properties \eqref{bounded}
and \eqref{commute}.
\end{thm}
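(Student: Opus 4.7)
The plan is to follow the same structural argument used for the Mini element, replacing the bubble projection from that case with the operator $\Pi_h^b$ defined in \eqref{pi-b-def}. The commuting property comes essentially for free from the definition of $\Pi_h^b$, while the boundedness in $H^1$ is the place where the quasi--uniformity assumption enters through a global inverse inequality.

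First I would verify \eqref{commute}. Writing $I - \Pi_h = (I - \Pi_h^b)(I - R_h)$ as before, it suffices to check that for every $w \in L^2(\Omega;\R^2)$ and $q \in Q_h$,
\[
\<\div(I - \Pi_h^b)w,q\> = -\<(I - \Pi_h^b)w,\grad q\> = 0.
\]
Since $Q_h$ consists of continuous piecewise linear functions and any triangle in $\T_h^{\partial}$ has only one boundary edge, the gradient $\grad q$ is piecewise constant and hence lies in $Z_h^0$. The defining property \eqref{pi-b-def} of $\Pi_h^b$ then gives $\<(I - \Pi_h^b)w,\grad q\> = 0$, and the commuting identity follows on applying this with $w = (I - R_h)v$.

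Next I would verify \eqref{bounded}. For the $L^2$ bound, Lemma~\ref{inf-sup-int} shows that $\Pi_h^b$ is uniformly bounded in $\LL(L^2(\Omega;\R^2))$ with constant $c_0^{-1}$, and the Clement operator $R_h$ is uniformly bounded in $L^2$ by \eqref{int-prop} with $j=k=0$ after summing over elements using the bounded overlap of the macroelements $\{\Omega_T\}$. Combining these gives $\|\Pi_h v\|_{L^2} \le c\|v\|_{L^2}$.

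For the $H^1$ bound, the operator $\Pi_h^b$ is no longer local, so I cannot argue elementwise as in the Mini case. Instead I use the global inverse inequality \eqref{q-uniform-est}, valid on quasi--uniform meshes, together with the Clement estimate \eqref{int-prop} with $j=0$, $k=1$:
\[
\|\Pi_h^b(I - R_h)v\|_{H^1} \le c\,h^{-1}\|\Pi_h^b(I - R_h)v\|_{L^2} \le c\,h^{-1}\|(I - R_h)v\|_{L^2} \le c\|v\|_{H^1(\Omega)},
\]
where the last step uses $h_{\Omega_T} \le c\,h$ by quasi--uniformity and summation over $T \in \T_h$. Adding the $H^1$ stability of $R_h$ yields $\|\Pi_h v\|_{H^1} \le c\|v\|_{H^1}$.

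The main obstacle, and the reason the argument is restricted to quasi--uniform meshes, is that $\Pi_h^b$ is genuinely nonlocal: the basis of $Z_h^0$ involves coefficients on boundary edges that depend on the global requirement $\curl z|_T = 0$ for $T \in \T_h^{\partial}$. Consequently one cannot avoid a global inverse estimate to pass from $L^2$ to $H^1$ stability of $\Pi_h^b$, and this forces the quasi--uniformity hypothesis \eqref{quasi-uniform}. Everything else is routine bookkeeping once Lemma~\ref{inf-sup-int} is in hand.
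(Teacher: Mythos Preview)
Your proof is correct and follows essentially the same route as the paper's: use $\grad Q_h \subset Z_h^0$ together with \eqref{pi-b-def} to get the commuting relation, invoke Lemma~\ref{inf-sup-int} for the $L^2$ bound on $\Pi_h^b$, and combine the global inverse estimate \eqref{q-uniform-est} with the Clement approximation property \eqref{int-prop} for the $H^1$ bound. The only minor inaccuracy is in your closing commentary: the nonlocality of $\Pi_h^b$ arises because the edge bubbles $\psi_e$ and the Nedelec basis functions $\phi_e$ have overlapping supports across neighboring triangles (so the defining system \eqref{pi-b-def} does not decouple elementwise), not from any global constraint in $Z_h^0$---the curl-free condition on boundary triangles is purely local.
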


\begin{proof}
To show that the operator $\Pi_h$ fulfills the condition 
\eqref{commute}, it is enough to show that the operator $\Pi_h^b$
satisfies the corresponding condition \eqref{commute-bubble}. However, 
since $\grad Q_h \subset Z_h^0$
this follows exactly as before, since
\[
\< \div \Pi_h^b v,q\> = - \< \Pi_h^bv, \grad q\> = - \< v, \grad q \>
= \< \div v, q \>.
\]
Furthermore, to show \eqref{bounded} it is enough to show that $\Pi_h$
is uniformly bounded with respect to $h$ in both $\LL(L^2(\Omega; \R^2))$
and $\LL(H_0^1(\Omega;\R^2))$. However, the $L^2$--result follows from  the
corresponding bounds
for the operators $\Pi_h^b$ and $R_h$. Finally, by combining \eqref{int-prop},
\eqref{q-uniform-est} and the boundedness of $\Pi_h^b$ in $L^2$ we obtain 
\begin{align*}
\|\Pi_h v \|_{H^1(\Omega)} &\le \| \Pi_h^b (I - R_h)v \|_{H^1(\Omega)}
+ \| R_h v \|_{H^1(\Omega)}\\
&\le c (h^{-1}\| \Pi_h^b (I - R_h)v \|_{L^2(\Omega)} + \|  v
\|_{H^1(\Omega)}) \\
&\le c (c_0^{-1}h^{-1}\| (I - R_h)v \|_{L^2(\Omega)} + \|  v \|_{H^1(\Omega)}) \\
&\le c \| v \|_{H^1(\Omega)},
\end{align*}
and this is the desired uniform bound in $\LL(H_0^1(\Omega;\R^2))$.
\end{proof}

\subsubsection{More general triangulations}\label{remove}
The analysis of the Taylor--Hood method given above leans havily on
the assumption that there are no triangles in $\T_h$ with more than
one edge on the boundary of $\Omega$. This assumption simplifies the
analysis, but it is not necessary. The purpose of this section 
is to relax this assumption.

We let $\T_h^{\partial,1}$ and  $\T_h^{\partial,2}$ denote the subset
of triangles in $\T_h$ with one or two edges in $\partial \Omega$,
respectively. We let $\T_h^{\partial} = \T_h^{\partial,1}\cup
\T_h^{\partial,2}$ 
be the set of all boundary triangles, and as before
$\T_h^i = \T_h \setminus \T_h^{\partial}$.
We note that $T \in \T_h^{\partial,2}$ then
there is a unique associated triangle $T^- \in \T_h$ such that $T \cap T^- \in
\Delta_1^i(\T_h)$. We will denote this interior edge associated any $T
\in \T_h^{\partial,2}$ by $e_T$, and we will use
$T^*$ to denote the macroelement defined by the two
triangles $T$ and $T^-$.
The set of all interior edges of the form $e_T, \, T \in
\T_h^{\partial,2}$, 
will be denoted $\Delta_1^{i,2}(\T_h)$, while 
$\Delta_1^{i,1}(\T_h) = \Delta_1^{i}(\T_h) \setminus \Delta_1^{i,2}(\T_h)$.
Throughout this section \emph{we will assume that} all the triangles
of the form $T^-$ are interior triangles, i.e., 
\[
T^- \in \T_h^i \quad \text{for all } T \in \T_h^{\partial,2}. 
\]
The interpolation operators $\Pi_h$ and $\Pi_h^b$ will be defined as
above, with the only exception that the definition of the bubble space 
$V_h^b$ is changed slightly in the neighborhood of the edges in $\Delta_1^{i,2}(\T_h)$.
We note that the result of Lemma~\ref{triangle2}
still holds if $T \in \T_h^{\partial,1}$.
To establish the result of Lemma~\ref{inf-sup-int}, and as a
consequence Theorem~\ref{main}, in the present case we basically
need an anolog of Lemma~\ref{triangle2} for triangles $T \in
\T_h^{\partial,2}$. More precisely, we need to modify the definition 
of the map $\Phi_h$,  used in proof of Lemma~\ref{inf-sup-int}, 
to the present case. Actually, the map $\Phi_h$
will not be defined locally on the triangles $T \in \T_h^{\partial,2}$,
but rather on the corresponding macroelement $T^*$.
As in the previous section the space $Z_h^0$ is taken to be the
subspace
of the Nedelec space $Z_h$ such that $z \in Z_h^0$ is constant on the
triangles in $\T_h^{\partial}$. To be able to define the interpolation
operator $\Pi_h^b$, mapping into the bubble space, by \eqref{pi-b-def},
the two spaces $V_h^b$ and $Z_h^0$ must be balanced. In particular,
they should have the same dimension. 
However, in the present case the
dimension of the space $Z_h^0$ is not the same as the number of interior
edges. To see this just consider the restriction $Z_h^0(T^*)$ of
$Z_h^0$ to a macroelement macroelement $T^*$,
cf. Figure~\ref{macroelement} below.
The dimension of the space $Z_h^0(T^*)$ is four,
while there are only three interior edges, namely the three edges of
$T^-$. To compensate for this we will extend the space of bubble
functions $V_h^b$, by including also ``normal bubbles'' on
the edges $e_T,\, T \in \T_h^{\partial,2}$.

In the present case we define the space $V_h^b \subset V_h$ by
\[
V_h^b = \sp (\{\, b_et_e\, |\,  e \in \Delta_1^{i}(\T_h)\, \} \cup \{\, b_e n_e \,|\, e
\in \Delta_1^{i,2}(\T_h)\, \}).
\]
Here $t_e$ and $n_e$ are tangent and normal vectors to the edge $e$
with length $|e|$. With this definition the space  $V_h^b$ has the
same dimension as $Z_h^0$. Furthermore, the map $\Phi_h :V_h^b \to Z_h^0$
will be defined to satisfy property \eqref{edgeDOF} for all edges in
$\Delta_1^{i,1}(\T_h)$.
Note that this specifies $\Phi_h$ on all triangles in $\T_h$,
except for the ones that belongs to the macroelements $T^*$, $T \in
\T_h^{\partial,2}$.
To complete the definition of $\Phi_h$ we need to specify its
restriction to each macroelement $T^*$.

Consider a macroelement $T^*$ of the form given in Figure~\ref{macroelement}.
\begin{figure}
\begin{center}
\scalebox{0.4}{
\input{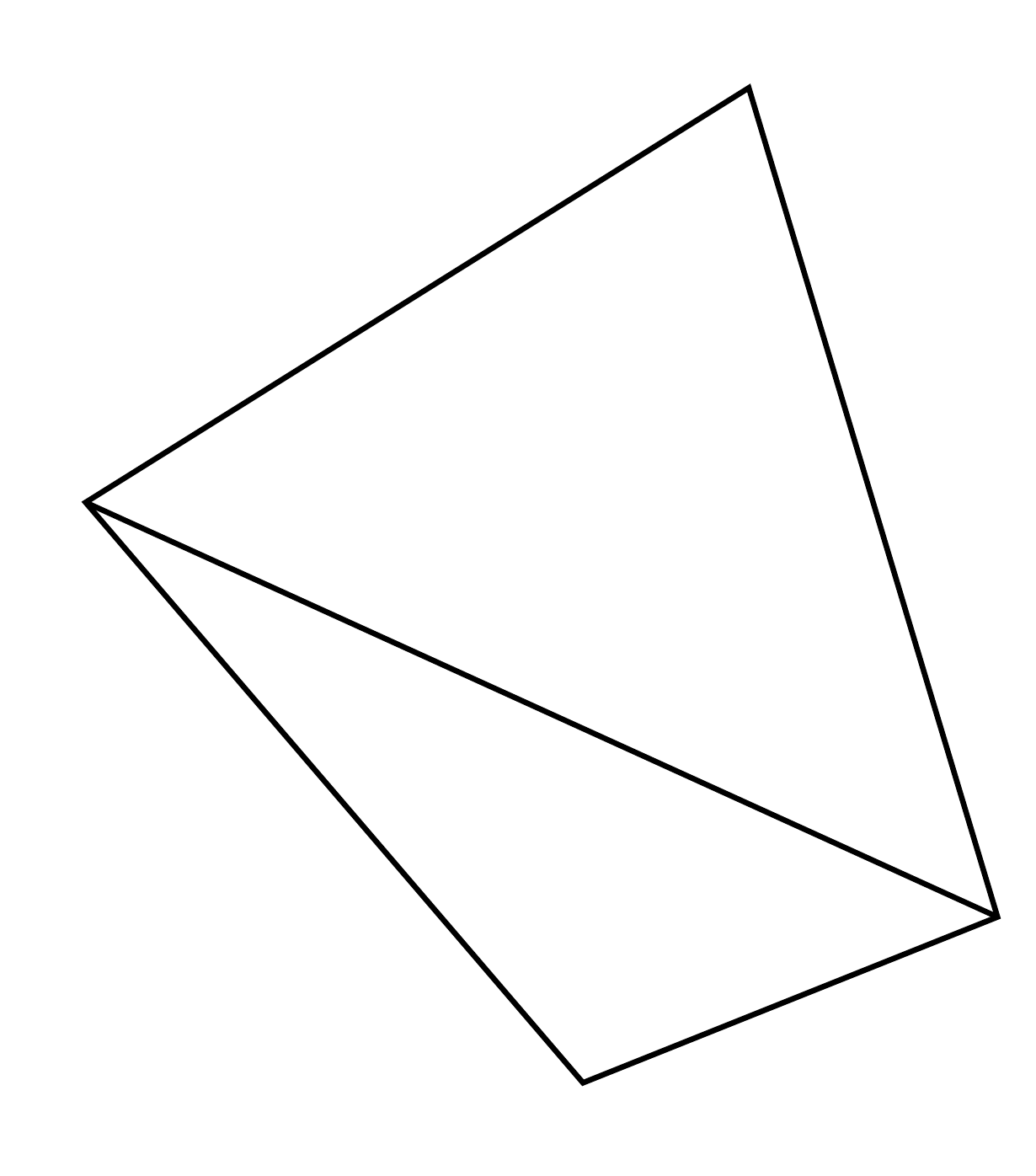_t}}
\end{center}
\caption{The macroelement $T^*$.}
\label{macroelement}
\end{figure}
Here the edge $e_T$ has endpoints denoted by $x_1$ and $x_2$,
the third boundary vertex of $T^*$ is 
$x_0$, while the single interior vertex of $T^*$ is $x_3$.
We will use $e_i$ to denote the edge opposite $x_i, \, i=1,2$, of the triangle $T$,
while $e_i^-$ are the corresponding edges of the triangle $T^-$.
We let $\Phi_T : V_h^b(T^*) \to Z_h^0(T^*)$ be the restriction of
$\Phi_h$
to $T^*$. To be compatible with the definition of $\Phi_h$ outside the
macroelements $T^*$ the map $\Phi_T$ has to satisfy 
condition \eqref{edgeDOF} on the edges $e_i^-$, i.e., 
\begin{equation}\label{compatible}
\int_{e_i^-}\Phi_T(v) \cdot t_{e_i^-}\, ds = |e_i^-|^{-2} \, \int_{e_i^-}v
\cdot t_{e_i^-}\, ds, \quad i=1,2,
\end{equation}
where $t_{e_i^-}$ is a vector tangential to $e_i^-$.
As a basis for the space $Z_h^0(T^*)$ we will use the
functions
\[
\phi_i^- = \lambda_i \grad \lambda_3 - \lambda_3 \grad \lambda_i,
\quad i=1,2,
\]
with support only in $T^-$, combined with the two functions $\phi_i$
given by 
\[
\phi_i = \left\{\begin{array}{cc} \grad \lambda_i  \quad &\text{on } T,\\
(-1)^i \phi_T, \quad &\text{on } T^-, \end{array} \right.
\]
for $i=1,2$, where $\phi_T = \lambda_1\grad \lambda_2 - \lambda_2\grad \lambda_1$
corresponds to the Whitney form associated the edge $e_T = (x_1,
x_2)$.
The functions $\phi_i, \phi_i^-$ for $i=1,2$ spans the space $Z_h^0(T^*)$.

We will define two basis functions 
$\psi_i$ of  $V_h^b$ as a multiple of the scalar bubble function
$b_{e_T}$,
namely, 
\begin{equation}\label{basisV_h^b}
\psi_i = b_{e_T}w_i = 6\lambda_1\lambda_2  w_i \quad
i=1,2,
\end{equation}
where the vectors $w_i$ will be chosen below.
Furthermore, the functions $\psi_i^-$, are given as
\[
\psi_i^- = 6 \lambda_i\lambda_3(x_3 - x_i) + \beta (-1)^i\lambda_1\lambda_2
(x_2 - x_1), \quad i=1,2,
\]
where $\beta = 6|T^-|/(5|T| + 4|T^-|)$.
We note that $0 < \beta < 3/2$.

The functions  $\psi_i, \psi_i^-$ for $i=1,2$ span the space
$V_h^b(T^*)$,
and we define $\Phi_T(\psi_i) = \phi_i$ and $\Phi_T(\psi_i^-) = \phi_i^-$.
A map of this form will 
satisfy the compatibility condition 
\eqref{compatible} by construction.
The motivation for the
choice of the constant $\beta$ is that we obtain 
\begin{equation}\label{orth-property}
\int_{T^*}\psi_i^- \cdot \phi_j \, dx = 0, \quad i,j = 1,2.
\end{equation}

\begin{lem} The orthogonality conditions \eqref{orth-property} hold.
\end{lem}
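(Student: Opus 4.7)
The plan is to split the integral across the two sub-triangles of the macroelement, evaluate each piece explicitly using \eqref{bary-int}, and verify that the choice of $\beta$ was engineered precisely so the two contributions cancel.

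First I split $\int_{T^*} = \int_T + \int_{T^-}$. On $T$ the barycentric coordinate $\lambda_3$ vanishes, so the first term of $\psi_i^-$ drops out and only the $\beta$--term contributes. Using $\phi_j|_T = \grad \lambda_j$ together with the identity $\grad \lambda_j \cdot (x_2-x_1) = \delta_{j,2} - \delta_{j,1} = (-1)^j$ for $j\in\{1,2\}$, and the barycentric formula $\int_T \lambda_1\lambda_2\,dx = |T|/12$, I get
\[
\int_T \psi_i^- \cdot \phi_j\,dx = \beta\,(-1)^{i+j}\,\frac{|T|}{12}.
\]

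Next, on $T^-$ I use $\phi_j|_{T^-} = (-1)^j(\lambda_1\grad\lambda_2 - \lambda_2\grad\lambda_1)$. For the first term of $\psi_i^-$, the scalar product $(\lambda_1\grad\lambda_2-\lambda_2\grad\lambda_1)\cdot(x_3-x_i)$ simplifies via $\grad\lambda_k\cdot(x_3-x_i) = -\delta_{k,i}$ ($k,i\in\{1,2\}$) to $(-1)^{i+1}\lambda_{3-i}$, producing an integrand proportional to $\lambda_1\lambda_2\lambda_3$. For the second term of $\psi_i^-$, the product $(\lambda_1\grad\lambda_2-\lambda_2\grad\lambda_1)\cdot(x_2-x_1)$ equals $\lambda_1+\lambda_2$. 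Applying \eqref{bary-int} (with $\int_{T^-}\lambda_1\lambda_2\lambda_3\,dx = |T^-|/60$ and $\int_{T^-}(\lambda_1^2\lambda_2+\lambda_1\lambda_2^2)\,dx = |T^-|/15$) gives
\[
\int_{T^-}\psi_i^- \cdot \phi_j\,dx = (-1)^{i+j+1}\,\frac{|T^-|}{10} + \beta\,(-1)^{i+j}\,\frac{|T^-|}{15}.
\]

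Adding the two contributions, I obtain
\[
\int_{T^*}\psi_i^-\cdot\phi_j\,dx = (-1)^{i+j}\Bigl[\beta\Bigl(\tfrac{|T|}{12} + \tfrac{|T^-|}{15}\Bigr) - \tfrac{|T^-|}{10}\Bigr] = \frac{(-1)^{i+j}}{60}\bigl[\beta(5|T|+4|T^-|) - 6|T^-|\bigr],
\]
which vanishes exactly when $\beta = 6|T^-|/(5|T|+4|T^-|)$, the prescribed value. The same sign pattern $(-1)^{i+j}$ appears in both pieces, so the argument works uniformly for all four pairs $(i,j)\in\{1,2\}^2$ without separate cases.

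The only real subtlety is sign bookkeeping and keeping the two representations of $\psi_i^-$ and $\phi_j$ on $T$ versus $T^-$ straight; everything else is a direct application of the barycentric integration formula \eqref{bary-int}. Indeed, reading the calculation backwards shows that $\beta$ was defined so that precisely these orthogonality relations hold, so there is nothing deeper to uncover here beyond the computation.
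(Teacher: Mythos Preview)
Your proof is correct and follows essentially the same approach as the paper: split $\int_{T^*}$ into $\int_T + \int_{T^-}$, evaluate each piece via the barycentric integration formula \eqref{bary-int}, and observe that the chosen $\beta$ makes the sum vanish. The only difference is presentational---the paper works out the case $i=j=1$ explicitly and then invokes the sign relation $\int_{T^*}\psi_1^-\cdot\phi_2\,dx = -\int_{T^*}\psi_1^-\cdot\phi_1\,dx$ (with ``similar computations'' for $\psi_2^-$), whereas you track the $(-1)^{i+j}$ factor throughout and treat all four pairs at once.
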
 

\begin{proof}
The identities \eqref{orth-property} can be verified by formula \eqref{bary-int}. For
example
\begin{align*}
\int_{T^*} \psi_1^- \cdot \phi_1 \, dx &= - \beta
\int_T\lambda_1\lambda_2\grad \lambda_1 \cdot (x_2 - x_1)\, dx\\
&- \int_{T^-}(\lambda_1\grad \lambda_2 - \lambda_2\grad
\lambda_1)\cdot(6\lambda_1\lambda_3(x_3-x_1)
- \beta\lambda_1\lambda_2(x_2 - x_1)) \, dx\\
&= \beta\int_T \lambda_1\lambda_2 \, dx - \int_{T^-}(6
\lambda_1\lambda_2\lambda_3
- \beta (\lambda_1^2\lambda_2 + \lambda_1\lambda_2^2)) \, dx\\
&= (-6|T^-| + \beta(5|T| + 4|T^-|)/60 = 0.
\end{align*}
Furthermore, it is easy to check that 
\[
\int_{T^*} \psi_1^- \cdot \phi_2 \, dx = - \int_{T^*} \psi_1^- \cdot
\phi_1 \, dx,
\]
and as a consequence $\int_{T^*} \psi_1^- \cdot \phi_2 \, dx = 0$.
Similar computations can be done for the integrals involving
$\psi_2^-$.
\end{proof}

We can also verify, again using formula \eqref{bary-int}, that
the $2\times 2$ matrix $M^- = \{M^-_{i,j}\}_{i,j =1,2} = \{\int_{T^-} \psi_i^-\cdot \phi_j^- 
\}_{i,j =1,2}$ is given by 
\[
M^- = |T^-|\begin{pmatrix} (24 - \beta)/60 & (6 + \beta)/60\\ (6 +
  \beta)/60 & (24 - \beta)/60 \end{pmatrix}.
\]
For $0 < \beta < 3/2$ this symmetric matrix is strictly diagonally
dominant with both eigenvalues greater than $|T^-|/4$.

Finally, we need to investigate the 
$2\times 2$ matrix $M = \{M_{i,j}\}_{i,j =1,2} = \{\int_{T^*} \psi_i \cdot \phi_j 
\}_{i,j =1,2}$. However, first we need to define the functions $\psi_i
= \Phi_T(\phi_i)$ precisely
by specifying the vectors $w_i$ in \eqref{basisV_h^b}.
We let 
\[
\psi_1 = 6\gamma \lambda_1\lambda_2 (x_3 - x_2), \quad \text{and } 
\psi_2 = 6\gamma^{-1}\lambda_1\lambda_2 (x_3 - x_1),
\]
where the positive constant $\gamma$ will be chosen below.

Assume for a moment that $T^*$ is
a parallelogram. Then $x_3 - x_1 = x_2 - x_0$ and $x_3 - x_2 = x_1 -
x_0$,
and therefore we would have easy computable representations of the
functions $\psi_i$ on both $T$ and $T^-$. In general, we
introduce a new point $\hat x_0 \in \R^2$, depending on $T^-$, with
the property that $\hat x_0,x_1,x_2,x_3$ corresponds to the corners 
of a parallelogram, cf. Figure~\ref{macro-par}.
\begin{figure}
\begin{center}
\scalebox{0.4}{
\input{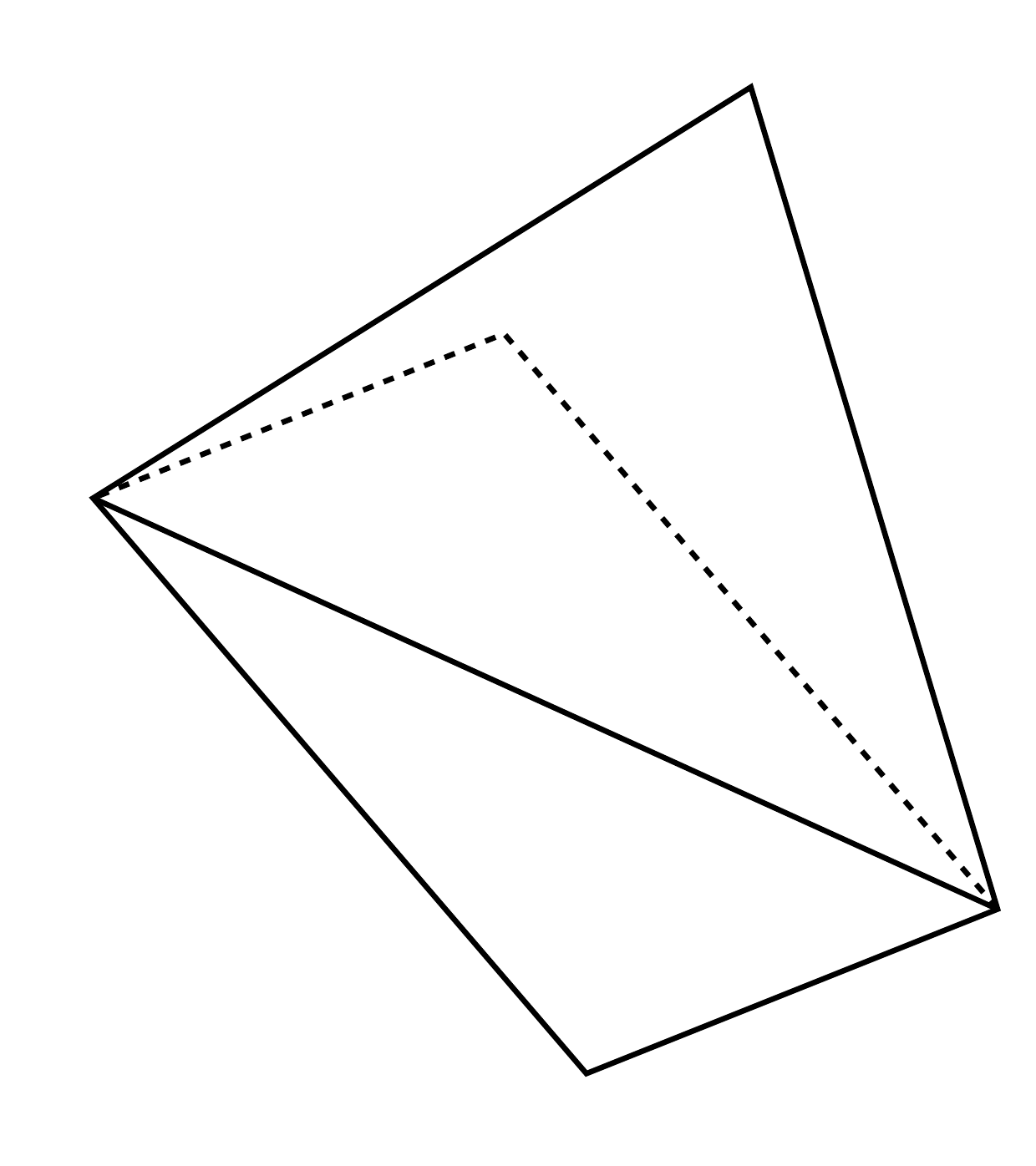_t}}
\end{center}
\caption{The macroelement $T^*$ and the associated parallelogram.}
\label{macro-par}
\end{figure}
More precisely,
\[
\hat x_0 = x_1 - (x_3 - x_2) = x_1 + x_2 - x_3.
\]
Let $\{\hat \lambda_i\}_{i=0}^2$ be the barycentric coordinates with
respect to the
triangle $T$,
extended to linear functions on all of $\R^2$. Then $\hat \lambda_1(x_3) +
\hat \lambda_2(x_3) > 1$ and 
\[
\hat \lambda_1(\hat x_0) + \hat \lambda_2(\hat x_0)
= 2 - \hat \lambda_1(x_3) + \hat \lambda_2(x_3)
 < 1.
\]
In fact, it is a consequence of shape regularity that there is a
constant $\alpha > 0$, independent of $h$ and the choice of $T \in
\T_h^{\partial,2}$,
such that 
\begin{equation}\label{shape}
\hat \lambda_1(\hat x_0) + \hat \lambda_2(\hat x_0) \le 1 - \alpha.
\end{equation}
If we compute the matrix 
$\{\int_{T} \psi_i\cdot \phi_j \, dx\}$ we obtain 

\[
\{\int_{T} \psi_i\cdot \phi_j \, dx
\}_{i,j =1,2} = \frac{|T|}{2} \begin{pmatrix} \gamma(1 - \hat \lambda_1(\hat
  x_0)) &- \gamma\hat \lambda_2(\hat
  x_0) \\ - \gamma^{-1}\hat \lambda_1(\hat
  x_0)& \gamma^{-1}(1 - \hat \lambda_2(\hat
  x_0))\end{pmatrix}.
\]
To control the full matrix $M$ we also need to consider the 
contributions from the triangle $T^-$.
A straightforward computation, using formula \eqref{bary-int}, shows
that the matrix $\{\int_{T^-} \psi_i\cdot \phi_j \}$ is given by 
\[
\{\int_{T^-} \psi_i\cdot \phi_j \, dx
\}_{i,j =1,2}
= 
\frac{|T^-|}{5}\begin{pmatrix} \gamma & -\gamma\\ -\gamma^{-1}&
  \gamma^{-1} \end{pmatrix}.
\]
We will utilize the constant $\gamma$ to obtain a symmetric matrix
$M$.
We define 
\[
\gamma = \sqrt{\frac{2|T^-| + 5|T|\hat\lambda_1(\hat x_0)}
{2|T^-| + 5|T|\hat\lambda_2(\hat x_0)}}.
\]
This choice of $\gamma$ is motivated by the desired identity
\[
\gamma (\frac{|T|}{2}\hat\lambda_2(\hat x_0) + \frac{|T^-|}{5})
= \gamma^{-1}(\frac{|T|}{2}\hat\lambda_1(\hat x_0) + \frac{|T^-|}{5}),
\]
which can be seen to hold, and therefore the matrix $M$ is symmetric.
Furthermore, we note that 
\[
\gamma,\gamma^{-1} \le \sqrt{1 + \frac{5|T|}{2|T^-|}}.
\]
Therefore, it is a consequence of shape regularity that the positive
constant $\gamma$ is
bounded from above and below, independently of $h$ and the choice of
$T\in\T_h^{\partial,2}$.

\begin{lem}\label{M-pos}
The matrix $M$ defined above is symmetric and positive definite with 
both eigenvalues bounded below by $c_1|T|$, where $c_1 = \alpha\min(\gamma,\gamma^{-1})/2$.
\end{lem}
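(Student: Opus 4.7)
The plan is to prove Lemma~\ref{M-pos} by explicit inspection of the four entries of $M$, followed by an application of the Gershgorin circle theorem. Symmetry is essentially already built in: summing the two contributions $\{\int_T \psi_i\cdot \phi_j\, dx\}$ and $\{\int_{T^-} \psi_i\cdot \phi_j\, dx\}$ given in the paragraph preceding the lemma, the off--diagonal entries become
\[
M_{12}=-\gamma\bigl(\tfrac{|T|}{2}\hat\lambda_2(\hat x_0)+\tfrac{|T^-|}{5}\bigr),
\quad
M_{21}=-\gamma^{-1}\bigl(\tfrac{|T|}{2}\hat\lambda_1(\hat x_0)+\tfrac{|T^-|}{5}\bigr),
\]
and the identity defining $\gamma$ was chosen precisely so that $M_{12}=M_{21}$; so I would start the proof by recording this.

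Next I would turn to positive definiteness. I would first write down the diagonal entries,
\[
M_{11}=\gamma\bigl(\tfrac{|T|}{2}(1-\hat\lambda_1(\hat x_0))+\tfrac{|T^-|}{5}\bigr),
\quad
M_{22}=\gamma^{-1}\bigl(\tfrac{|T|}{2}(1-\hat\lambda_2(\hat x_0))+\tfrac{|T^-|}{5}\bigr),
\]
which are manifestly positive since $\hat\lambda_i(\hat x_0)<1$. The key computation is then
\[
M_{11}-|M_{12}|=\gamma\tfrac{|T|}{2}\bigl(1-\hat\lambda_1(\hat x_0)-\hat\lambda_2(\hat x_0)\bigr),
\qquad
M_{22}-|M_{21}|=\gamma^{-1}\tfrac{|T|}{2}\bigl(1-\hat\lambda_1(\hat x_0)-\hat\lambda_2(\hat x_0)\bigr).
\]
Using the shape--regularity bound \eqref{shape}, which gives $1-\hat\lambda_1(\hat x_0)-\hat\lambda_2(\hat x_0)\ge \alpha$, both right hand sides are at least $\min(\gamma,\gamma^{-1})\alpha|T|/2$, so $M$ is strictly diagonally dominant with a quantitative gap.

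Finally, since $M$ is symmetric, the Gershgorin circle theorem guarantees that each eigenvalue $\mu$ satisfies $\mu\ge \min_i(M_{ii}-|M_{ij}|)\ge c_1|T|$ with $c_1=\alpha\min(\gamma,\gamma^{-1})/2$, which is the asserted bound. I expect no real obstacle: the clever part of the construction (the choice of $\beta$ giving \eqref{orth-property}, the parallelogram point $\hat x_0$, and the specific $\gamma$) has already been used to decouple $M^-$ from $M$ and to symmetrize $M$; once those are in place the lemma reduces to the elementary Gershgorin estimate above, and the only thing one must carefully invoke is the uniform upper bound on $\gamma+\gamma^{-1}$ (hence uniform lower bound on $\min(\gamma,\gamma^{-1})$) coming from shape regularity, which is recorded immediately before the lemma statement.
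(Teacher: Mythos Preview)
Your proposal is correct and follows essentially the same route as the paper: write out the entries of $M$ by summing the $T$ and $T^-$ contributions, note that the choice of $\gamma$ makes $M$ symmetric, compute the Gershgorin gap $M_{ii}-|M_{ij}| = \gamma^{\pm1}\tfrac{|T|}{2}(1-\hat\lambda_1(\hat x_0)-\hat\lambda_2(\hat x_0))$, and invoke \eqref{shape} to bound this below by $c_1|T|$. Your write-up simply spells out the one-line Gershgorin step that the paper leaves implicit.
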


\begin{proof}
It follows from the calculations above that
\[
M = \begin{pmatrix} \gamma(\frac{|T|}{2}(1 - \hat \lambda_1(\hat
  x_0)) + \frac{|T^-|}{5}) &- \gamma(\frac{|T|}{2}\hat \lambda_2(\hat
  x_0)+ \frac{|T^-|}{5}) \\ - \gamma^{-1}(\frac{|T|}{2}\hat \lambda_1(\hat
  x_0)+ \frac{|T^-|}{5})& \gamma^{-1}(\frac{|T|}{2}(1 - \hat \lambda_2(\hat
  x_0)) + \frac{|T^-|}{5})\end{pmatrix}.
\]
Since $\hat\lambda_1(\hat x_0) + \hat\lambda_2(\hat x_0) \le 1 -
\alpha$
it follows from Gershgorin circle theorem that 
both eigenvalues of $M$ are
bounded below by $\frac{\alpha|T|}{2}\min(\gamma,\gamma^{-1})$.
\end{proof}

We now have the following result.

\begin{lem}\label{inf-sup-int2}
The conclusion of Lemma~\ref{inf-sup-int} holds in the present case.
\end{lem}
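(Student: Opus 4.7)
My plan is to mirror the strategy from the proof of Lemma~\ref{inf-sup-int}. Given $v\in V_h^b$, I take the test field $z=\Phi_h(v)\in Z_h^0$ and split the $L^2$ inner product as
\[
\langle v,\Phi_h(v)\rangle = \sum_{T\in\T_h^i\cup\T_h^{\partial,1}}\int_T v\cdot\Phi_T(v)\,dx + \sum_{T\in\T_h^{\partial,2}}\int_{T^*} v\cdot\Phi_T(v)\,dx.
\]
On every triangle in $\T_h^i\cup\T_h^{\partial,1}$ the map $\Phi_T$ is defined exactly as in the previous section, so Lemmas~\ref{triangle1} and~\ref{triangle2} already supply the desired local lower bound in terms of the local edge coefficients. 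The only new ingredient needed is an analogous local estimate on each macroelement $T^*$ with $T\in\T_h^{\partial,2}$.

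On such a macroelement I would expand
\[
v|_{T^*}=a_1\psi_1+a_2\psi_2+b_1\psi_1^-+b_2\psi_2^-,\qquad \Phi_T(v)|_{T^*}=a_1\phi_1+a_2\phi_2+b_1\phi_1^-+b_2\phi_2^-,
\]
so that the local quadratic form acquires the $2\times 2$ block structure
\[
\int_{T^*}v\cdot\Phi_T(v)\,dx=\begin{pmatrix}a\\ b\end{pmatrix}^T\begin{pmatrix}M & P\\ 0 & M^-\end{pmatrix}\begin{pmatrix}a\\ b\end{pmatrix},
\]
where $M,M^-$ are the symmetric $2\times 2$ matrices already analyzed, $P_{ij}=\int_{T^-}\psi_i\cdot\phi_j^-\,dx$, and the vanishing lower-left block is precisely the orthogonality identity \eqref{orth-property}. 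Lemma~\ref{M-pos}, together with the preceding calculation of $M^-$, shows that both diagonal blocks are symmetric and positive definite with smallest eigenvalue bounded below by a fixed positive multiple of $|T^*|$, uniformly in $h$ and in the choice of $T\in\T_h^{\partial,2}$. A Schur-complement identity then rewrites the form as
\[
\bigl(a+\tfrac12 M^{-1}Pb\bigr)^T M\bigl(a+\tfrac12 M^{-1}Pb\bigr)+b^T\bigl(M^--\tfrac14 P^T M^{-1}P\bigr)b,
\]
and a scaling argument gives $\|P\|=O(|T^*|)$, so the correction $\tfrac14 P^T M^{-1}P$ is a bounded perturbation of $M^-$ whose size is controlled, via shape regularity, by the scaling of $M$ and $M^-$. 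The main obstacle I expect is the careful verification that this correction is strictly dominated by $M^-$, so that the Schur-complement block remains uniformly positive definite; the outcome is the local bound $\int_{T^*}v\cdot\Phi_T(v)\,dx\ge c\,|T^*|(|a|^2+|b|^2)$ with $c$ independent of $h$ and of the particular macroelement.

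With the macroelement estimate in hand, the global inf--sup bound is produced exactly as in the proof of Lemma~\ref{inf-sup-int}. The scaling and shape-regularity equivalences between $\|\Phi_h(v)\|_{L^2(\Omega)}$ and the $\ell^2$ norm of the edge coefficients of $z\in Z_h^0$, and between $\|v\|_{L^2(\Omega)}$ and the $\ell^2$ norm of the coefficients of $v$ weighted by $|T|$, extend to the present setting once we include the additional normal-bubble degrees of freedom on the edges $e_T$, $T\in\T_h^{\partial,2}$, and the matching degrees of freedom in $Z_h^0$. Summing the local lower bounds from Lemmas~\ref{triangle1} and~\ref{triangle2} together with the new macroelement estimate, and combining with these norm equivalences, yields
\[
\langle v,\Phi_h(v)\rangle\ge c_0\,\|v\|_{L^2(\Omega)}\,\|\Phi_h(v)\|_{L^2(\Omega)},
\]
with $c_0>0$ independent of $h$, which is the desired inequality.
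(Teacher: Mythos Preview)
Your overall plan mirrors the paper's, but the argument has a genuine gap at precisely the point you flag as ``the main obstacle.'' With the fixed choice $z=\Phi_h(v)$, the local form on $T^*$ is $a^TMa+a^TPb+b^TM^-b$, and your Schur complement rewriting shows that coercivity requires $M^- - \tfrac14 P^TM^{-1}P$ to be uniformly positive definite. All three matrices scale like $|T^*|$, so this reduces to an inequality between dimensionless constants. But the lower bound on $M$ from Lemma~\ref{M-pos} carries the factor $c_1=\tfrac{\alpha}{2}\min(\gamma,\gamma^{-1})$, where $\alpha$ from \eqref{shape} may be small, while the entries of $P$ (e.g.\ $\int_{T^-}\psi_1\cdot\phi_1^-\,dx=\gamma|T^-|/5$) are of order $|T^-|$ with no small prefactor. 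There is no reason for the resulting inequality to hold across all shape-regular families, and you do not verify it. A secondary issue is that your splitting of $\langle v,\Phi_h(v)\rangle$ double-counts each $T^-\in\T_h^i$, and Lemma~\ref{triangle1} does not apply to $T^-$ anyway, since $V_h^b$ restricted to $T^-$ now has four generators (including the normal bubble on $e_T$) rather than the three tangential bubbles assumed there.

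The paper avoids the Schur-complement difficulty by \emph{not} testing with $\Phi_h(v)$. Writing $v|_{T^*}=v^++v^-$ with $v^+=a_1\psi_1+a_2\psi_2$, the orthogonality \eqref{orth-property} gives $\int_{T^*}v\cdot\Phi_T(v^+)\,dx=\int_{T^*}v^+\cdot\Phi_T(v^+)\,dx\ge c_1|T|\,|a^+|^2$, while a crude Cauchy--Schwarz bound yields $\int_{T^-}v\cdot\Phi_T(v^-)\,dx\ge \tfrac18|T^-|\,|a^-|^2 - c_2|T|\,|a^+|^2$. One then tests with the \emph{modified} field $\tilde\Phi_T(v)=C\,\Phi_T(v^+)+\Phi_T(v^-)$; choosing $C$ large enough that $Cc_1>c_2$ produces $\int_{T^*}v\cdot\tilde\Phi_T(v)\,dx\ge c|T^*|\,|a|^2$. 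Because $\psi_1,\psi_2$ and $\phi_1,\phi_2$ have vanishing tangential data on $e_1^-,e_2^-$, the scaled map $\tilde\Phi_T$ still satisfies the compatibility condition \eqref{compatible} and hence patches together with $\Phi_h$ on the remaining triangles into a global map $\tilde\Phi_h:V_h^b\to Z_h^0$. The freedom to rescale the test function on each macroelement is exactly the missing ingredient in your approach; it plays the same role that choosing $C$ large would play in making your Schur complement positive, but without having to prove a delicate constant inequality.
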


\begin{proof}
Let $v \in V_h^b$ be given. We first consider the situation on each macroelement $T^*$.
If $v= \sum_i (a_i^+\psi_i
+ a_i^- \psi_i^-) \in V_h^b(T^*)$ then we write $v= v^- + v^+$, where
$v^- =\sum_i a_i^- \psi_i^-$. 
Observe that Lemma~\ref{M-pos}, together with the orthogonality
property \eqref{orth-property}, implies that 
\[
\int_{T^*} v \cdot \Phi_T(v^+) \, dx = \int_{T^*} v^+ \cdot
\Phi_T(v^+) \, dx
 \ge c_1 |T||a^+|^2.
\]
Similarly, we have from the  property of the matrix $M^-$, the norm
equivalences expressed by \eqref{norm-z} and \eqref{norm-v}, and shape
regularity that 
\begin{align*}
\int_{T^-} v \cdot \Phi_T(v^-) \, dx &\ge \int_{T^-} v^- \cdot
\Phi_T(v^-) \, dx - \| v^+ \|_{L^2(T^-)}\| \Phi_T(v^-) \|_{L^2(T^-)}\\
& \ge \frac{1}{4}|T^-||a^-|^2  - c|T^-||a^-||a^+|\\
& \ge \frac{1}{8}|T^-||a^-|^2  - c_2|T||a^+|^2,
\end{align*}
where the constant $c_2$ is independent of $h$ and $T$.
By choosing $\tilde \Phi_T(v) = C \Phi(v^+) + \Phi(v^-)$, where the
constant $C$ is sufficiently large,  we can now
conclude that
\[
\int_{T^*} v \cdot \tilde \Phi_T(v) \, dx \ge c |T^*| |a|^2.
\]
We note that the map $\tilde \Phi_T$ will inherit the compatibility
condition \eqref{compatible} from the map $\Phi_T$.
By combining this result on each macroelement $T^*$,
with the map $\Phi_h$ defined previously on the rest of the triangles 
in $\T_h$, to a global map $\tilde \Phi_h$ mapping $V_h^b$, we can
conclude, as in the proof of Lemma~\ref{inf-sup-int}, that
\[
\sup_{z \in Z_h^0} \frac{\< v,z \>}{\| z \|_{L^2(\Omega)}} 
\ge \frac{\<v, \tilde \Phi_h(v)\>}{\|\tilde \Phi_h(v)\|_{L^2(\Omega)}} \ge c_0 \| v
\|_{L^2(\Omega)}.
\]
This completes the proof. 
\end{proof}
As we have noted above the result just given implies that the
conclusion of Theorem~\ref{main} holds will hold for the more general
meshes studied in this section.


\begin{thebibliography}{99}

\bibitem{a-b-f} 
F.~Brezzi, D.N.~Arnold and M.~Fortin,
\emph{A stable finite element method for Stokes equations},
Calcolo  \textbf{21} (1984), 337--344.

\bibitem{b-l}
J.~Bergh and J.~L\"{o}fstr\"{o}m, \emph{Interpolation spaces}, Springer-Verlag,
  1976.

\bibitem{B-P-97}
J.H. Bramble and J.E. Pasciak, \emph{Iterative techniques for time dependent {S}tokes problems},
  Comput. Math. Appl. \textbf{33} (1997), no.~1-2, 13--30, Approximation theory
  and applications.

\bibitem{Brezzi}
F.~Brezzi, \emph{On the existence, uniqueness and approximation of
  saddle--point problems arising from {L}agrangian multipliers}, RAIRO. Analyse
  Num\'{e}rique \textbf{8} (1974), 129--151.


\bibitem{BrezziFortin}
F.~Brezzi and M.~Fortin, \emph{Mixed and hybrid finite element methods},
  Springer-Verlag, 1991.



\bibitem{C-C-88}
J.~Cahouet and J.-P. Chabard, \emph{Some fast {$3$}{D} finite element solvers
  for the generalized {S}tokes problem}, Internat. J. Numer. Methods Fluids
  \textbf{8} (1988), no.~8, 869--895.

\bibitem{clement} P. Clement,
Approximation by finite element functions using local regularization,
RAIRO Anal. Num\'{e}r. 9 (1975), 77--84.

\bibitem{cos-mc}   M. Costabel and A. McIntosh,
{\em On Bogovski\u{i} and regularized Poincar\'{e} integral operators
  for de Rham complexes on Lipschitz domains},
http://arxiv.org/abs/0808.2614v1.

\bibitem{Dauge}
M.~Dauge, \emph{Stationary {S}tokes and {N}avier-{S}tokes systems on two- or
  three-dimensional domains with corners. {I}. {L}inearized equations}, SIAM
  Journal on Mathematical Analysis \textbf{20} (1989), no.~1, 74--97.

\bibitem{falk} R.S~Falk,
\emph{A Fortin operator for two--dimensional Taylor--Hood elements},
Mathematical Modelling and Numerical Analysis (M2AN)
               42 (2008), 411-424.


\bibitem{galdi} Giovanni P. Galdi, 
{\em An introduction to the mathematical theory of 
the Navier--Stokes equations, vol 1, Linearized Steady problems}, 
vol. 38, Springer Tracts in Natural Philosophy, Springer--Verlag,
New York 1994.

\bibitem{g-h-h} M. Geissert, H. Heck, and M. Hieber,
{\em On the equation $\div u = g$ and Bogovski\u{i}'s operator in
  Sobolev spaces of negative order},
Oper. Theory Adv. Appl. \textbf{168} (2006), 113--121.


\bibitem{g-r}
V.~Girault and P.-A. Raviart, \emph{Finite element methods for
  {N}avier-{S}tokes equations}, Springer Series in Computational Mathematics,
  vol.~5, Springer-Verlag, Berlin, 1986, Theory and algorithms.

\bibitem{lay-schu}
M.-J. Lay and L.L. Schumaker, \emph{Spline functions on triangulations},
vol. 110 of Encyclopedia of Mathematics and its Applications, Cambridge University Press, 2007. 

\bibitem{review}
K.-A. Mardal and R.~Winther, \emph{Preconditioning discretizations of
  systems of partial differential equations}, 
Numer. Linear Alg. Appl. \textbf{18} (2011), 1--40.

\bibitem{M-W-04}
K.-A. Mardal and R.~Winther, \emph{Uniform preconditioners for the time
  dependent {S}tokes problem}, Numer. Math. \textbf{98} (2004), no.~2,
  305--327.

\bibitem{errata}
\bysame, \emph{Erratum: ``{U}niform preconditioners for the time dependent
  {S}tokes problem'' [{N}umer. {M}ath. {\bf 98} (2004), no. 2, 305--327 ]},
  Numer. Math. \textbf{103} (2006), no.~1, 171--172.


\bibitem{StokesInterface3}
M.~A. Olshanskii, J.~Peters, and A.~Reusken, \emph{Uniform preconditioners for
  a parameter dependent saddle point problem with application to generalized
  {S}tokes interface equations}, Numerische Mathematik \textbf{105} (2006),
  159--191.

\bibitem{turek}
S. Turek, 
\emph{Efficient solvers for incompressible flow problems},
Springer-Verlag, 1999.

\end{thebibliography}
\end{document}